\newtheorem{thm}{Theorem}[section]
\newtheorem{cor}[thm]{Corollary}
\newtheorem{lem}[thm]{Lemma}
\newtheorem{prop}[thm]{Proposition}
\theoremstyle{definition}
\newtheorem{defn}[thm]{Definition}
\theoremstyle{remark}
\newtheorem{rem}[thm]{Remark}
\numberwithin{equation}{section}
\newcommand{\norm}[1]{\left\Vert#1\right\Vert}
\newcommand{\abs}[1]{\left\vert#1\right\vert}
\newcommand{\set}[1]{\left\{#1\right\}}
\newcommand{\eps}{\varepsilon}
\newcommand{\dbar}{\bar\partial}
\newcommand{\ddbar}{\partial\bar\partial}
\newcommand{\atopp}[2]{\genfrac{}{}{0pt}{}{#1}{#2}}
\DeclareMathOperator{\dom}{Dom}
\DeclareMathOperator{\re}{Re}
\DeclareMathOperator{\im}{Im}
\DeclareMathOperator{\supp}{supp}
\DeclareMathOperator{\dist}{dist}
\renewcommand{\H}{\mathcal H}
\newcommand{\LL}{\bar L}
\newcommand{\Om}{\Omega}
\newcommand{\dbars}{\bar\partial^*}
\newcommand{\dbarb}{\bar\partial_b}
\DeclareMathOperator{\Dom}{Dom}
\DeclareMathOperator{\Ran}{Range}
\newcommand{\bd}{\partial}
\newcommand{\p}{\partial}
\newcommand{\z}{\bar z}
\newcommand{\R}{\mathbb R}
\newcommand{\N}{\mathbb N}
\newcommand{\C}{\mathbb C}
\newcommand{\opL}{\mathcal{L}}
\newcommand{\ep}{\epsilon}
\newcommand{\I}{\mathcal{I}}
\newcommand{\vp}{\varphi}
\begin{document}

\title[Strong Closed Range Estimates]{Strong Closed Range Estimates: Necessary Conditions and Applications}%
\author{Phillip S. Harrington and Andrew Raich}%
\address{SCEN 309, 1 University of Arkansas, Fayetteville, AR 72701}%
\email{psharrin@uark.edu, araich@uark.edu}%

\subjclass[2010]{32F17, 32A70, 32U05, 32T27, 32W05}
\keywords{Strong closed range estimates, closed range, compactness, $\dbar$-problem, elliptic regularization, necessary conditions, $\dbar$-Neumann problem}

\begin{abstract}
The $L^2$ theory of the $\dbar$ operator on domains in $\C^n$ is predicated on establishing a good basic estimate. Typically, one proves not a single basic estimate
but a family of basic estimates that we call a family of strong closed range estimates.
Using this family of estimates on $(0,q)$-forms as our starting point, we establish necessary geometric
and potential theoretic conditions.

The paper concludes with several applications. We investigate the consequences for compactness estimates for the $\dbar$-Neumann problem, and we also establish a generalization of Kohn's weighted theory via
elliptic regularization. Since our domains are not necessarily pseudoconvex, we must take extra care with the regularization.
\end{abstract}

\maketitle

\section{Introduction}\label{sec:Introduction}
Since H\"ormander's pivotal work on the $L^2$-theory of the $\dbar$-problem \cite{Hor65}, there has been a tremendous effort to characterize the regularity properties of the
$\dbar$-Neumann operator in terms of estimates, geometry, and potential theory. It has been known since the 1960s that pseudoconvexity is both necessary and sufficient for the range
of the $\dbar$-operator to be closed \emph{at every form level} $1 \leq q \leq n$ and for the absence of nontrivial harmonic forms at every form level $1\leq q\leq n$ \cite{Hor65,AnGr62}. The primary tool that analysts use to prove
closed range and other related properties is to establish an appropriate basic estimate or family of basic estimates. For example, the basic estimate
\begin{equation}\label{eqn:simple basic estimate}
c_\vp \| f\|_{L^2(\Om,\vp)}^2 \leq \big(\|\dbar f\|_{L^2(\Om,\vp)}^2 + \|\dbars_\vp f\|_{L^2(\Om,\vp)}^2\big) + C_\vp \|f\|_{W^{-1}(\Om,\vp)}^2
\end{equation}
for all $f \in L^2_{0,q}(\Om,\vp)\cap\Dom(\dbar)\cap\Dom(\dbars_\vp)$ suffices to show that the space of harmonic forms is finite dimensional and $\dbar$ has closed range in $L^2_{0,q}(\Omega)$ and $L^2_{0,q+1}(\Omega)$. It turns out that in every case where \eqref{eqn:simple basic estimate} is known to hold, we can actually prove a family of estimates -- namely, instead of \eqref{eqn:simple basic estimate} holding
for a single function $\vp$, we have \eqref{eqn:simple basic estimate} for every $\vp = t\phi$ where $t$ is sufficiently large
and $\phi$ is some fixed function and (typically) $c_{t\phi} = tC_q$ and $C_{t\phi} \leq O(t^2)$. This is an example of what we call a family of strong closed range estimates.

In this paper, we take strong closed range estimates as our starting point and explore the consequences for a domain admitting such a family. Our main result establishes a certain quantitative condition
on the number of nonnegative/nonpositive eigenvalues of the Levi form (a geometric condition) as well as the number of positive/negative eigenvalues of the complex Hessian of the weight
function restricted to $T^{1,0}_p(\p\Om)\times T^{0,1}_p(\p\Om)$ (a potential theoretic condition). Our main result has an application to compactness estimates for the
$\dbar$-problem, and the existence of a family of strong closed ranged estimates allows us to establish a generalization of Kohn's weighted theory for solving the weighted $\dbar$-Neumann operator
in $L^2$ Sobolev spaces. To prove this extension on non-pseudoconvex domains, we need to account for the possibility of non-trivial harmonic forms and the boundary condition induced by $\Dom(\dbars)$ -- typically, the weighted
theory has only one of these issues (non-trivial harmonic forms for $\dbarb$ on CR manifolds without boundary, and the boundary condition induced by $\Dom(\dbars)$ for $\dbar$ on pseudoconvex domains in $\mathbb{C}^n$), and we are particularly careful to avoid pitfalls that sometimes appear in the literature.

Surprisingly, since H\"ormander's work, the only results regarding closed range of $\dbar$ for $(0,q)$-forms on not-necessarily pseudoconvex domains have been to establish
\emph{sufficient} conditions and until the present work, none have attempted to find \emph{necessary} conditions. For example there are
several papers on the annulus or annular regions between two pseudoconvex domains \cite{Sha85a,Sha10,Hor04}, and we have investigated very general sufficient conditions
for both $\dbar$ and $\dbarb$ to have closed range. In fact, in the
language of this paper, we prove the existence of a family of strong closed range estimates and also establish a generalization of Kohn's weighted theory \cite{HaRa11,HaRa15,HaRa17z,HaRa18,HaRa19,HaRa20}.

The format of the paper is the following: we state the Main Results at the end of this section, define our notation and operators in Section \ref{sec:notation}, prove the main theorem
regarding strong closed range estimates in Section \ref{sec:SCRE}, and present the applications in Section \ref{sec:apps}.

\subsection{Statements of the Main Results}
\begin{thm}
\label{thm:main_theorem}
  Let $\Omega\subset\mathbb{C}^n$ be a bounded domain with $C^2$ boundary admitting a family of strong closed range estimates for some $1\leq q\leq n-1$ and some weight function $\varphi\in C^2(\overline\Omega)$, as in Definition \ref{defn:strong_closed_range} below.  Then for each connected component $S$ of $\partial\Omega$, one of the following two cases holds:
  \begin{enumerate}
    \item For every $z\in S$, the Levi form for $\partial\Omega$ has at least $n-q$ nonnegative eigenvalues and the restriction of $i\ddbar\varphi$ to $T^{1,0}_z(\partial\Omega)\times T^{0,1}_z(\partial\Omega)$ has at least $n-q$ positive eigenvalues bounded below by $\frac{C_q}{q}$.
    \item For every $z\in S$, the Levi form for $\partial\Omega$ has at least $q+1$ nonpositive eigenvalues and the restriction of $i\ddbar\varphi$ to $T^{1,0}_z(\partial\Omega)\times T^{0,1}_z(\partial\Omega)$ has at least $q+1$ negative eigenvalues bounded above by $-\frac{C_q}{n-q-1}$.
  \end{enumerate}
  If $\Omega$ admits a family of strong closed range estimates near some $p\in\partial\Omega$, then either $(1)$ or $(2)$ holds at $z=p$.
\end{thm}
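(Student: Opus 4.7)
The plan is to argue by contradiction using the Kohn--Morrey--H\"ormander (KMH) identity: assume the SCRE holds near $p\in\partial\Omega$ but that neither (1) nor (2) holds at $p$, and construct test $(0,q)$-forms concentrated at $p$ whose SCRE inequality fails in the limit $t\to\infty$. Fix local coordinates with orthonormal $(1,0)$-frame $L_1,\dots,L_{n-1}$ spanning $T^{1,0}_p\partial\Omega$ and $L_n$ the complex normal; diagonalize at $p$ the Levi form and the restricted $i\ddbar\vp$ with eigenvalues $\lambda_1\geq\cdots\geq\lambda_{n-1}$ and $\mu_1\geq\cdots\geq\mu_{n-1}$, respectively. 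Failure of (1) then means $\lambda_{n-q}(p)<0$ or $\mu_{n-q}(p)<C_q/q$, while failure of (2) means $\lambda_{n-q-1}(p)>0$ or $\mu_{n-q-1}(p)>-C_q/(n-q-1)$.

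Apply the SCRE to the weight $t\vp$ (so $c_{t\vp}=tC_q$ and $C_{t\vp}=O(t^2)$) for $(0,q)$-forms $f\in\Dom(\dbar)\cap\Dom(\dbars_{t\vp})$, where the domain condition forces $f_I|_{\partial\Omega}=0$ whenever $n\in I$. Combined with the KMH identity and divided by $t$, this yields
\begin{multline*}
 C_q\|f\|^2_{L^2(\Omega,t\vp)} \leq \int_\Omega\!\sum_{K,j,k=1}^n\!\vp_{jk}f_{jK}\overline{f_{kK}}e^{-t\vp}\,dV + \tfrac{1}{t}\!\int_{\partial\Omega}\!\sum_{K,j,k<n}\!\lambda_{jk}f_{jK}\overline{f_{kK}}e^{-t\vp}\,dS \\
 + \tfrac{1}{t}\sum_{I,j}\|\bar L_j f_I\|^2 + O(t)\|f\|^2_{W^{-1}}.
\end{multline*}
I test this with $f=g\,\bar\omega^J$ for a $q$-multi-index $J$ and a Gaussian coherent state $g=c_t\,e^{-t|z-p|^2/2}\chi(z)$ centered at $p$ at scale $t^{-1/2}$. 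For tangential $J\subset\{1,\dots,n-1\}$ no boundary condition is needed, and both $\sum_{j\in J}\mu_j$ and a boundary Levi contribution $\sum_{j\in J}\lambda_j$ appear in the limit; for $J$ containing $n$ one multiplies $g$ by a defining-function factor to enforce $g|_{\partial\Omega}=0$, isolating the Hessian sum. As $t\to\infty$ the integrals localize at $p$, yielding a pointwise inequality $C_q\leq\sum_{j\in J}\mu_j+\text{(Levi contribution)}$. Running $J$ over all $q$-subsets of suitable eigen-directions and invoking min--max for Hermitian forms produces either case (1) (enough large positive $\mu$'s plus nonnegative $\lambda$'s) or case (2) (the dual statement, arising from the choices $J\ni n$ that reverse the effective sign through the Hodge-type pairing $(0,q)\leftrightarrow(0,n-q-1)$).

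The principal technical obstacle is controlling constants in the coherent-state limit: the error terms $\tfrac{1}{t}\|\bar L_j g\|^2$ and $O(t)\|g\|^2_{W^{-1}}$ are each $O(1)$ rather than $o(1)$, since $\bar L_j g=-\tfrac{t}{2}(z_j-p_j)g$ and $\|g\|^2_{W^{-1}}\sim t^{-1}$. The proof must therefore carry explicit absolute constants rather than rely on clean asymptotic vanishing, and it must also handle the boundary-layer factor needed when $n\in J$, whose extra $\bar L_n$-derivative contribution must be tracked. The final sentence of the theorem (pointwise dichotomy from a local SCRE) is immediate from the locality of the construction; the connected-component assertion follows from the closedness of the sets where (1) and (2) hold (by semicontinuity of eigenvalues) together with a continuity argument showing that any ``switch'' between the two cases along a component would produce, via the above test-form construction, a point incompatible with the pointwise dichotomy just established.
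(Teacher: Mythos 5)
Your strategy is correct in outline---apply the Morrey--Kohn--H\"ormander identity to concentrated test forms, rescale in $t$, and read off eigenvalue information---and the paper indeed follows an adaptation of H\"ormander's Theorem~3.2.1.  However, as written the argument contains a genuine gap, and the mechanism you propose for getting case~(2) is not the one that works.

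The principal problem is exactly the one you flag but do not resolve.  Testing with an isotropic Gaussian $g=c_t e^{-t|z-p|^2/2}\chi$ makes $\frac{1}{t}\sum_j\|\bar L_j g\|^2$ of the same order as $C_q\|g\|^2_{L^2}$, with an absolute constant depending on $n$ (roughly $n/4$) that has nothing to do with the geometry.  There is no way to ``carry explicit constants'' through this: the derivative term simply swamps $C_q$ and the resulting inequality is vacuous.  The resolution in the paper (following H\"ormander) is structural, not bookkeeping: the test form is $\psi_1(\tau z')\psi_2(\tau z_n)\exp(\tau^2(A(z)+2sB(z)+iz_n))$ with $A$, $B$ \emph{holomorphic} second-order Taylor polynomials of $\rho_1$ and $\varphi$ and the coupling $t(\tau)=2s\tau^2$, together with the anisotropic rescaling $z'\mapsto\tau^{-1}w'$, $z_n\mapsto\tau^{-1}\mathrm{Re}\,w_n+i\tau^{-2}\mathrm{Im}\,w_n$.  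The holomorphic factor is killed by $\bar\partial_j$; what survives in $\sum_j\|\bar\partial_j f^\tau\|^2$ after rescaling is exactly $\frac12\int\sum_j|\partial\psi_1/\partial\bar w_j|^2 e^{-2L_s}$, where $L_s$ is the combined form $i\partial\bar\partial\rho_1|_0+s\,i\partial\bar\partial\varphi|_0$.  One is then left with an integral inequality valid for \emph{all} cutoffs $\psi_1\in C^\infty_0(\mathbb{C}^{n-1})$, and the freedom to choose $\psi_1$ is essential: H\"ormander's Lemma~3.2.2 converts this into the eigenvalue inequality $sC_q-\sum_{j\leq q}\lambda_j^s\leq\sum_j\max(-\lambda_j^s,0)$ by choosing $\psi_1$ holomorphic in the positive eigendirections and Gaussian in the negative ones.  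A fixed Gaussian cutoff cannot reproduce this.

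Your mechanism for case~(2) --- choosing $J\ni n$ and appealing to a Hodge-type $(0,q)\leftrightarrow(0,n-1-q)$ duality --- is not how the dichotomy arises and does not work directly: if $n\in J$ the form must vanish on $\partial\Omega$ to lie in $\dom\dbar^*_{t\varphi}$, which kills the boundary integral in MKH and hence erases the Levi-form contribution you are trying to detect.  In the paper the test form always uses the tangential multi-index (the wedge $\bigwedge_{j=1}^q(d\bar z_j-(\partial\tilde\delta/\partial z_n)^{-1}\partial\tilde\delta/\partial z_j\,d\bar z_n)$, equal to $d\bar z_1\wedge\cdots\wedge d\bar z_q$ at $p$), and \emph{both} alternatives of the dichotomy fall out of the single eigenvalue inequality $sC_q\leq\sum_{j\leq q}\lambda_j^s-\sum_{j\leq n_-^s}\lambda_j^s$ by case analysis on $n_-^s$ and $n_+^s$; the incompatibility of $n_-^s\leq q$ and $n_+^s\leq n-q-1$ forces one of the two.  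You also omit the role of $s$ as a free parameter: the paper takes $s\to 0$ to read off the Levi-form sign conditions and $s\to\infty$ to read off the quantitative lower/upper bounds on eigenvalues of $i\partial\bar\partial\varphi$.

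Finally, the connected-component step is cleaner than you suggest: $\{(s,z):n_-^s(z)\geq q+1\}$ and $\{(s,z):n_+^s(z)\geq n-q\}$ are open by lower semicontinuity of $n_\pm^s$, they are \emph{disjoint} because $(q+1)+(n-q)>n-1$, and they cover $\mathbb{R}^+\times S$ by the pointwise dichotomy, so connectedness immediately forces one of them to be all of $\mathbb{R}^+\times S$.  No separate ``no switching'' argument is needed.
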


\begin{rem}
  We have stated our result in a form which avoids technical details about the relationship between the Levi form and the complex hessian of $\varphi$, but we actually prove a much stronger statement.  Let $\rho$ be a defining function for $\Omega$ normalized so that $|\nabla\rho|=1$ on $\partial\Omega$.  For a constant $s\geq 0$ and $z\in\partial\Omega$, let $\mathcal{L}_s(z)$ denote the linear combination $i\ddbar\rho+s i\ddbar\varphi$ restricted to $T^{1,0}_z(\partial\Omega)\times T^{0,1}_z(\partial\Omega)$, and let $\{\lambda_1^s(z),\ldots,\lambda_{n-1}^s(z)\}$ denote the eigenvalues of $\mathcal{L}_s(z)$ in nondecreasing order.  Then for each connected component $S$ of $\partial\Omega$, we either have $\lambda_q^s(z)\geq\frac{s C_q}{q}$ for all $z\in S$ and $s\geq 0$ or we have $\lambda_{q+1}^s(z)\leq-\frac{s C_q}{n-q-1}$  for all $z\in S$ and $s\geq 0$.  If, for example, we are in the first case but $\lambda_q^0(z)=0$ for some $z$, then this more refined result can be used to deduce information about the restriction of $i\ddbar\varphi$ to the kernel of the Levi form at $z$.
\end{rem}

\begin{cor}
\label{cor:pseudoconvexity}
  Let $\Omega\subset\mathbb{C}^n$ be a bounded domain with $C^2$ boundary, and write $\Omega=\Omega_0\backslash\bigcup_{j\in J}\overline\Omega_j$ where   $\Omega_0$ is a bounded domain with connected $C^2$ boundary and $\{\Omega_j\}_{j\in J}$ is a collection of domains with connected $C^2$ boundaries that are relatively compact in $\Omega$ such that $\{\overline\Omega_j\}_{j\in J}$ is disjoint.  If $\Omega$ admits a family of strong closed range estimates for $q=1$, then $\Omega_0$ is pseudoconvex and the restriction of $i\ddbar\varphi$ to $T^{1,0}_p(\partial\Omega_0)\times T^{0,1}_p(\partial\Omega_0)$ is positive definite.  If $\Omega$ admits a family of strong closed range estimates for $q=n-2$, then each $\Omega_j$ is pseudoconvex and the restriction of $i\ddbar\varphi$ to $T^{1,0}_p(\partial\Omega_j)\times T^{0,1}_p(\partial\Omega_j)$ is negative definite.  If $\Omega$ admits a family of strong closed range estimates for $q=n-1$, then $\Omega=\Omega_0$.
\end{cor}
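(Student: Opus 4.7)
The plan is to apply Theorem \ref{thm:main_theorem} componentwise on $\partial\Omega$ together with one elementary geometric observation: on any bounded $C^2$ domain $D\subset\mathbb{C}^n$ there exists a boundary point at which the Levi form of $D$ is strictly positive definite on $T^{1,0}\partial D$. To see this, translate coordinates so $0\notin\overline D$ and pick $p\in\partial D$ maximizing $|z|^2$ on $\overline D$; then $\overline D\subset\overline{B(0,|p|)}$ with tangency at $p$, and a standard comparison of defining functions against $|z|^2-|p|^2$ (whose complex Hessian is the identity) forces $i\ddbar\rho$ to dominate a positive definite form on $T^{1,0}_p\partial D$.

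For $q=1$, the Levi form has only $n-1$ eigenvalues, so alternative $(1)$ requires all of them to be nonnegative and alternative $(2)$ requires two of them to be nonpositive. On the connected outer boundary $\partial\Omega_0$, the observation above produces a point at which the Levi form of $\Omega_0$ (which agrees with that of $\Omega$ there) is positive definite, so has zero nonpositive eigenvalues, ruling out alternative $(2)$. Hence alternative $(1)$ holds on all of $\partial\Omega_0$, yielding pseudoconvexity of $\Omega_0$ together with $n-1$ positive eigenvalues for the restriction of $i\ddbar\varphi$, i.e. positive definiteness on $T^{1,0}_p\partial\Omega_0\times T^{0,1}_p\partial\Omega_0$.

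For $q=n-2$, take any inner boundary $\partial\Omega_j$. Because a defining function for $\Omega$ near $\partial\Omega_j$ is a positive multiple of the negative of a defining function for $\Omega_j$, the Levi form of $\Omega$ at $\partial\Omega_j$ equals the negative of the Levi form of $\Omega_j$. Applying the observation to $\Omega_j$ yields a point at which the Levi form of $\Omega$ is negative definite, ruling out alternative $(1)$ (which needs at least $n-q=2$ nonnegative eigenvalues). Hence alternative $(2)$ holds on all of $\partial\Omega_j$: at least $n-1$ nonpositive eigenvalues for the Levi form of $\Omega$ (equivalently, the Levi form of $\Omega_j$ is positive semi-definite, so $\Omega_j$ is pseudoconvex), and at least $n-1$ negative eigenvalues for the restriction of $i\ddbar\varphi$, bounded above by $-C_{n-2}$, which is the asserted negative definiteness.

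For $q=n-1$, alternative $(2)$ would demand $q+1=n$ nonpositive eigenvalues among only $n-1$ and is therefore impossible; alternative $(1)$ must then hold on every connected component of $\partial\Omega$, requiring at least one nonnegative eigenvalue at each point. But on any hypothetical inner boundary $\partial\Omega_j$, the sign-reversal argument of the previous case furnishes a point at which the Levi form of $\Omega$ is negative definite, contradicting $(1)$. Consequently $\Omega$ has no inner boundary and $\Omega=\Omega_0$. The only real care the argument demands is tracking signs when passing between defining functions of $\Omega$ and $\Omega_j$, and verifying that the outer-ball comparison yields strict positive definiteness; both are standard.
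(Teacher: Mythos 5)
Your proof is correct and follows essentially the same approach as the paper: both rest on the observation that each boundary component $\partial\Omega_0$, $\partial\Omega_j$ contains a strictly convex point (the outer-ball/farthest-point argument), which forces alternative $(1)$ of Theorem \ref{thm:main_theorem} on $\partial\Omega_0$ and alternative $(2)$ on each $\partial\Omega_j$, with the $\Omega$-vs-$\Omega_j$ sign flip of the Levi form doing the rest. Your write-up is somewhat more explicit than the paper's, in particular in spelling out why alternative $(2)$ is impossible when $q=n-1$ (it would demand $n$ nonpositive eigenvalues out of $n-1$) and hence why no inner boundary can exist; the paper leaves this step to the reader.
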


We will see that Definition \ref{defn:strong_closed_range} involves a family of smooth, compactly supported functions $\{\chi_t\}$ satisfying the growth condition $\lim_{t\rightarrow\infty}\frac{\norm{\chi}^2_{C^1(\Omega)}}{t^3}=0$.  This may seem to be a technical convenience, but in fact this distinguishes strong closed range estimates, which require a non-trivial weight function $\varphi$, from stronger families of estimates, which hold with no weight function.  For example, we have
\begin{prop}
\label{prop:subelliptic}
  Let $\Omega\subset\mathbb{C}^n$ be a domain with $C^2$ boundary such that for some $1\leq q\leq n-1$ and some $\eta>0$, $\Omega$ admits a subelliptic estimate of the form
  \[
  \norm{u}^2_{W^{\eta}(\Omega)}\leq C\left(\norm{\dbar u}^2_{L^2(\Omega)}+\norm{\dbar^* u}^2_{L^2(\Omega)}\right)
\]
for all $u\in L^2_{(0,q)}(\Omega)\cap\dom\dbar\cap\dom\dbar^*$.  Then $\Omega$ admits a family of estimates of the form \eqref{eq:strong_closed_range} for $\varphi=0$ and a family of smooth, compactly supported functions $\{\chi_t\}$ such that
\[
  0<\limsup_{t\rightarrow\infty}\frac{\norm{\chi_t}^2_{C^1(\Omega)}}{t^{(1+\eta)/\eta}}<\infty.
\]
\end{prop}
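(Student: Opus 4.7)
The plan is to convert the subelliptic hypothesis into the desired family of estimates by splitting $\norm{u}^2_{L^2(\Om)}$ into a piece trapped by a cutoff supported away from $\partial\Om$ and a boundary-strip piece, and then optimizing the width of the cutoff in terms of $t$. For each small $\eps > 0$, fix $\chi_{(\eps)}\in C_c^\infty(\Om)$ equal to $1$ on $\{z\in\Om:\dist(z,\partial\Om)>2\eps\}$, supported in $\{\dist(\cdot,\partial\Om)>\eps\}$, and with $\norm{\chi_{(\eps)}}_{C^1(\Om)}\leq C/\eps$ (available because $\partial\Om$ is $C^2$, so the signed distance is $C^2$ in a tubular neighborhood). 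Combining the decomposition $|u|^2 = \chi_{(\eps)}^2|u|^2 + (1-\chi_{(\eps)}^2)|u|^2$ with the fractional Hardy inequality on Lipschitz domains
\[
\norm{u}^2_{L^2(\{\dist(\cdot,\partial\Om)<2\eps\})} \leq K\eps^{2\eta}\norm{u}^2_{W^\eta(\Om)}
\]
and the hypothesized subelliptic estimate then gives
\[
\norm{u}^2_{L^2(\Om)} \leq \norm{\chi_{(\eps)}u}^2_{L^2(\Om)} + C'\eps^{2\eta}\bigl(\norm{\dbar u}^2_{L^2(\Om)}+\norm{\dbars u}^2_{L^2(\Om)}\bigr).
\]

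To produce the family indexed by $t$, set $\eps_t:=t^{-1/(2\eta)}$ so that $\eps_t^{2\eta}\,t=1$, multiply the previous estimate by $t$, and define $\chi_t:=\sqrt{t}\,\chi_{(\eps_t)}$. The resulting inequality has the shape of \eqref{eq:strong_closed_range} with $\varphi=0$. Since $\chi_{(\eps_t)}$ transitions from $0$ to $1$ across a strip of width comparable to $\eps_t$, both $\norm{\chi_{(\eps_t)}}_{C^1(\Om)}$ and its sup-norm lower bound on $|\nabla\chi_{(\eps_t)}|$ are comparable to $1/\eps_t = t^{1/(2\eta)}$. Consequently $\norm{\chi_t}^2_{C^1(\Om)}$ is comparable (above and below) to $t\cdot t^{1/\eta} = t^{(1+\eta)/\eta}$, so the limsup of $\norm{\chi_t}^2_{C^1(\Om)}/t^{(1+\eta)/\eta}$ is both positive and finite, as required.

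The main obstacle I expect is justifying the boundary-strip inequality at and beyond the critical exponent $\eta=1/2$. For $\eta\in(0,1/2)$ it is the classical fractional Hardy inequality on Lipschitz domains and is immediate. At $\eta=1/2$ there is a logarithmic loss which must either be absorbed into the overall constant (making the limsup still finite) or circumvented by exploiting the boundary condition $u\in\Dom(\dbars)$, which forces partial Dirichlet-type behavior on certain components of $u$ and restores Hardy on those components. For $\eta>1/2$ the substitute is a trace-based decomposition $u(z)=u(z')+\int_0^{\dist(z,\partial\Om)}\partial_\nu u\,ds$ along the inward normal from the nearest boundary point $z'$, with the trace term bounded via the $C^2$-trace theorem $W^\eta(\Om)\to W^{\eta-1/2}(\partial\Om)$ and the integral term bounded via Cauchy--Schwarz against the $W^\eta$ control of $\partial_\nu u$; careful accounting is needed to recover the sharp $\eps^{2\eta}$ scaling, but the remaining algebra is routine bookkeeping.
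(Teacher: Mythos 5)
Your overall strategy---split $\norm{u}^2_{L^2(\Omega)}$ into a plateau cutoff term and a boundary-strip term, control the strip by $\eps^{2\eta}\norm{u}^2_{W^\eta(\Omega)}$, then optimize $\eps$ as a power of $t$---is a genuinely different route from the paper's. The paper instead begins with $\norm{u}^2_{L^2(\Omega)}\leq\frac{1}{2t}\norm{u}^2_{W^\eta(\Omega)}+\frac{t}{2}\norm{u}^2_{W^{-\eta}(\Omega)}$, bounds the negative-Sobolev term by $\norm{(-\rho)^\eta u}_{L^2(\Omega)}$ via Grisvard's Theorem 1.4.4.3 and a duality argument, and then builds the weight $(-\rho)^\eta$ directly into the cutoff, choosing $\chi_t\sim t\,(-\rho)^\eta\psi(\cdot)$ truncated near distance $t^{-1/(2\eta)}$ from $\partial\Omega$ rather than $\sqrt{t}$ times a plateau. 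Both constructions produce the same $t^{(1+\eta)/\eta}$ growth, so that part of your alternative is legitimate, and your direct reading of the lower bound (the gradient must be of size $\sim 1/\eps_t$ somewhere) is a clean substitute for the paper's appeal to Theorem \ref{thm:main_theorem}.

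There is, however, a genuine gap that you identify but do not close. Your boundary-strip inequality $\norm{u}^2_{L^2(\{\dist(\cdot,\partial\Omega)<2\eps\})}\leq K\eps^{2\eta}\norm{u}^2_{W^\eta(\Omega)}$ with $K$ uniform in $\eps$ is equivalent to the fractional Hardy inequality $\int_\Omega|u|^2\dist(\cdot,\partial\Omega)^{-2\eta}\,dV\lesssim\norm{u}^2_{W^\eta(\Omega)}$, which holds on $W^\eta(\Omega)$ \emph{without} vanishing boundary data only for $0<\eta<\frac{1}{2}$. For $\eta>\frac{1}{2}$ it already fails for $u\equiv 1$: the strip has measure $\sim\eps$, while $\eps^{2\eta}\norm{1}^2_{W^\eta(\Omega)}\sim\eps^{2\eta}\ll\eps$. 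Your proposed repairs do not work. At $\eta=\frac{1}{2}$ the logarithmic loss is $|\log\eps_t|\sim\log t$, not a harmless constant: absorbing it either forces $C_q\sim 1/\log t\to 0$ or forces $\chi_t$ to carry an extra $\sqrt{\log t}$, and in either case the stated conclusion fails; moreover the condition $u\in\dom\dbar^*$ forces vanishing of only the complex-normal components on $\partial\Omega$, so it cannot supply a Hardy bound for the tangential components. For $\eta>\frac{1}{2}$, the Newton--Leibniz decomposition along inward normals requires absolute continuity of $u$ on normal segments, which $u\in W^\eta$ with $\frac{1}{2}<\eta<1$ does not in general possess, and the trace theorem gives control in $W^{\eta-1/2}(\partial\Omega)$, not an $L^2(\partial\Omega)$ bound compatible with the $\eps^{2\eta}$ scaling you need. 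This gap is not peripheral: $\eta=\frac{1}{2}$ (strictly pseudoconvex boundaries) is precisely the case the paper invokes after the proposition to show that the $t^3$ threshold in Definition \ref{defn:strong_closed_range} is sharp. The paper's route through $W^{-\eta}$ evades your obstruction because the supremum defining $\norm{u}_{W^{-\eta}(\Omega)}$ ranges over $W^\eta_0(\Omega)$, where the Hardy inequality from Grisvard \emph{is} available whenever $\eta-\frac{1}{2}$ is not a nonnegative integer. To salvage your argument you need either that duality step or some other mechanism that restores the Hardy inequality for functions without prescribed boundary data, and this is not a matter of routine bookkeeping.
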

On strictly pseudoconvex domains, we have subelliptic estimates for $\eta=\frac{1}{2}$, and hence the family of cutoff functions $\{\chi_t\}$ given by Proposition \ref{prop:subelliptic} satisfies
\[
  0<\limsup_{t\rightarrow\infty}\frac{\norm{\chi_t}^2_{C^1(\Omega)}}{t^3}<\infty.
\]
This is the sense in which the growth condition in Definition \ref{defn:strong_closed_range} is sharp: if we relax this growth condition, then we have a large class of examples admitting a family of estimates of the form \eqref{eq:strong_closed_range} such that the conclusions of Theorem \ref{thm:main_theorem} do not hold.

As an immediate consequence of our main theorem, we have the following application to the compactness theory for the $\dbar$-Neumann problem.
\begin{thm}
\label{thm:compactness}
  Let $\Omega\subset\mathbb{C}^n$ be a domain with $C^2$ boundary.  Suppose that  $\Omega$ admits a family of compactness estimates for some $1\leq q\leq n-1$, as in Definition \ref{defn:compactness_estimate} below.  If $C_\eps$ denotes the constant in \eqref{eq:compactness}, then
    \begin{equation}
    \label{eq:quantitative_compactness}
      \limsup_{\eps\rightarrow 0^+}\eps^2 C_\eps>0.
    \end{equation}
\end{thm}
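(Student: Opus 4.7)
The plan is to argue by contradiction: assume $\limsup_{\eps\to 0^+}\eps^2 C_\eps = 0$ and convert the compactness estimates into a family of strong closed range estimates with weight $\vp\equiv 0$, to which Theorem \ref{thm:main_theorem} will apply and force an impossibility.

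The reduction is essentially a rescaling. Substituting $\eps=1/t$ in \eqref{eq:compactness} and multiplying through by $t$ gives
\[
t\|u\|^2 \leq \|\dbar u\|^2 + \|\dbars u\|^2 + tC_{1/t}\,\|u\|^2_{W^{-1}(\Om)},
\]
and the standing assumption $\eps^2 C_\eps\to 0$ is exactly $tC_{1/t}=o(t^3)$. To recast the $W^{-1}$ error in the form called for by Definition \ref{defn:strong_closed_range}, I will fix a smooth nonnegative function $\chi_0$ compactly supported in $\C^n$ with $\chi_0\equiv 1$ on $\overline{\Om}$, and set $\chi_t:=\sqrt{tC_{1/t}}\,\chi_0$. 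Since $\|u\|^2_{W^{-1}(\Om)}\leq\|u\|^2_{L^2(\Om)}=\|\chi_0 u\|^2_{L^2}$, the previous inequality upgrades to
\[
t\|u\|^2 \leq \|\dbar u\|^2 + \|\dbars u\|^2 + \|\chi_t u\|^2,
\]
while the cutoffs satisfy $\|\chi_t\|^2_{C^1}/t^3 = (C_{1/t}/t^2)\|\chi_0\|^2_{C^1}\to 0$ by hypothesis. This is precisely a family of strong closed range estimates with $\vp\equiv 0$ and $C_q=1$.

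With that in hand, I invoke Theorem \ref{thm:main_theorem}: at every boundary point, the restriction of $i\ddbar\vp$ to the complex tangent space must admit either $n-q$ eigenvalues bounded below by $1/q>0$ (case 1) or $q+1$ eigenvalues bounded above by $-1/(n-q-1)$ (case 2, which is already excluded on dimensional grounds when $q=n-1$, since then $q+1=n$ exceeds the rank of the form). Since $\vp\equiv 0$ makes this Hermitian form vanish identically, both alternatives fail, contradicting the assumption $\limsup_{\eps\to 0^+}\eps^2 C_\eps = 0$.

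The only obstacle I foresee is cosmetic: verifying that the estimate produced above matches Definition \ref{defn:strong_closed_range} verbatim, in particular that $\vp\equiv 0$ is a permissible weight (which is consistent with Proposition \ref{prop:subelliptic}) and that any fixed absolute constant multiplying $\|\chi_t u\|^2$ in that definition can be harmlessly absorbed into $\chi_0$. Once those bookkeeping points are checked, the argument reduces to the one-line rescaling above plus a single appeal to Theorem \ref{thm:main_theorem}.
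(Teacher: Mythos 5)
Your overall strategy (contradiction, rescale $\eps=1/t$, weight $\vp\equiv 0$, invoke Theorem \ref{thm:main_theorem}) is the same as the paper's, but your construction of the cutoff $\chi_t$ has a genuine gap, and it is not the cosmetic issue you flagged at the end. Definition \ref{defn:strong_closed_range} requires $\chi_t\in C^\infty_0(\Omega)$, i.e.\ $\chi_t$ must be \emph{compactly supported in} $\Omega$. Your $\chi_t=\sqrt{tC_{1/t}}\,\chi_0$ is identically equal to $\sqrt{tC_{1/t}}$ on $\overline\Omega$ and in particular does not vanish on $\partial\Omega$; it is not an admissible cutoff. This is not a bookkeeping point: the entire mechanism of Theorem \ref{thm:main_theorem} is to test the estimate against forms $f^\tau$ concentrated near a boundary point, and the argument only works because the error term $\norm{\chi_t f}^2$ (equivalently $C_t\norm{(-\rho)f}^2$ via Lemma \ref{lem:SCRE_relationships}, $(4)\Rightarrow(3)$, which explicitly uses $\chi_t|_{\partial\Omega}=0$) degenerates near $\partial\Omega$; see \eqref{eq:compact_term_limit}. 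With your $\chi_t$, the error term contributes $tC_{1/t}\norm{f^\tau}^2$ on the nose, and since $C_{1/t}=\eps^2 C_\eps \cdot t^2 / t = o(t^2)$ can still grow, this dominates $tC_q\norm{f^\tau}^2$ and the eigenvalue inequality \eqref{eq:eigenvalue_estimate} never materializes. Your bound $\norm{u}^2_{W^{-1}(\Omega)}\leq\norm{u}^2_{L^2(\Omega)}$, while true, throws away exactly the boundary decay that the $W^{-1}$ norm encodes.

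The fix is already in the paper: the rescaled compactness estimate is verbatim family $(2)$ of Lemma \ref{lem:SCRE_relationships} (i.e.\ \eqref{eq:temp_strong_closed_range_2}) with $C_t=tC_\eps$, $C_q=1$, $\vp\equiv 0$, and $C_t/t^3=\eps^2 C_\eps\to 0$. One then invokes $(2)\Rightarrow(3)\Rightarrow(4)$ of that lemma --- in particular, $(2)\Rightarrow(3)$ uses the Grisvard-type inequality \eqref{eq:Grisvard} to convert the $W^{-1}$ norm into a $(-\rho)$-weighted $L^2$ norm, and $(3)\Rightarrow(4)$ then builds a genuinely compactly supported $\chi_t$ from $(-\rho)$. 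That produces an honest family of strong closed range estimates with $\vp\equiv 0$, after which the appeal to Theorem \ref{thm:main_theorem} goes through exactly as you describe.
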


Our second and final application is to establish the weighted $L^2$-theory for the $\dbar$-problem in the presence of a family of strong closed range estimates.
\begin{thm}\label{thm:regularity of B and sol'n ops}
Let $\Om\subset\C^n$ be a smooth domain which admits the family of strong closed range estimates \eqref{eq:temp_strong_closed_range_2}
for some smooth function $\varphi$. Then for every $k\geq 1$ there exists $T_k$ so that if $t \geq T_k$, the following operators are continuous for all
$0 \leq s \leq k$:
\begin{enumerate}[i.]
\item The $\dbar$-Neumann operator
\[
N^q_{t\varphi}: L^{2,s}_{0,q}(\Om,t\varphi) \to L^{2,s}_{0,q}(\Om,t\varphi);
\]
\item The weighted canonical solution operators for $\dbar$ and $\dbars_{t\varphi}$:
\begin{align*}
\dbars_{t\varphi} N^q_{t\varphi} &: L^{2,s}_{0,q}(\Om,t\varphi) \to L^{2,s}_{0,q-1}(\Om,t\varphi); \\
\dbar N^q_{t\varphi} &: L^{2,s}_{0,q}(\Om,t\varphi) \to L^{2,s}_{0,q-1}(\Om,t\varphi);\\
N^q_{t\varphi}\dbars_{t\varphi} &: L^{2,s}_{0,q-1}(\Om,t\varphi) \to L^{2,s}_{0,q}(\Om,t\varphi);\\
N^q_{t\varphi}\dbar &: L^{2,s}_{0,q-1}(\Om,t\varphi) \to W^{s+1}_{0,q}(\Om,t\varphi);
\end{align*}
\item The projection operators:
\begin{align*}
  \dbar\dbars_{t\varphi} N^q_{t\varphi}&: L^{2,s}_{0,q}(\Om,t\varphi) \to L^{2,s}_{0,q}(\Om,t\varphi),\\
  \dbar N^q_{t\varphi} \dbars_{t\varphi}& : L^{2,s}_{0,q-1}(\Om,t\varphi) \to L^{2,s}_{0,q-1}(\Om,t\varphi),\\
  \dbars_{t\varphi}\dbar N^q_{t\varphi}&: L^{2,s}_{0,q}(\Om,t\varphi) \to L^{2,s}_{0,q}(\Om,t\varphi),\\
  \dbars_{t\varphi} N^q_{t\varphi}\dbar& : L^{2,s}_{0,q-1}(\Om,t\varphi)\to L^{2,s}_{0,q-1}(\Om,t\varphi).
\end{align*}

\item The harmonic projection $H^q_{t\varphi}:L^{2,s}_{0,q}(\Om,t\varphi) \to L^{2,s}_{0,q}(\Om,t\varphi)$.
\end{enumerate}
\end{thm}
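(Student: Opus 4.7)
The strategy is elliptic regularization combined with an induction on the Sobolev order $s$, implemented through tangential differential operators constructed from a special boundary frame. I would proceed in four steps.

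\emph{Step 1: Regularized Neumann problem.} For $\delta\in(0,1]$, I would introduce the coercive quadratic form
\[
Q_{t\vp,\delta}(u,v) = (\dbar u,\dbar v)_{t\vp} + (\dbars_{t\vp} u,\dbars_{t\vp} v)_{t\vp} + \delta\,(\nabla u,\nabla v)_{t\vp}
\]
on $\Dom(\dbar)\cap\Dom(\dbars_{t\vp})$. The Friedrichs representation theorem produces a regularized Neumann operator $N^{q,\delta}_{t\vp}$, and the boundary value problem posed by $Q_{t\vp,\delta}$ is elliptic in the Lopatinski sense for each fixed $\delta>0$ (independent of pseudoconvexity of $\Om$). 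Consequently $N^{q,\delta}_{t\vp}\alpha$ is smooth for smooth data, which legalizes all subsequent integration by parts.

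\emph{Step 2: Uniform tangential estimates.} For each $s\leq k$ I would prove
\[
\|u_\delta\|^2_{W^{s}(\Om,t\vp)} \les_{s,t} \|\alpha\|^2_{W^{s}(\Om,t\vp)}
\]
uniform in $\delta$, where $u_\delta = N^{q,\delta}_{t\vp}\alpha$. After fixing a special boundary frame so that a family of tangential derivatives $D^\sigma_\tau$ preserves $\Dom(\dbars_{t\vp})$ up to controllable error, test with $v=D^{2\sigma}_\tau u_\delta$ in $Q_{t\vp,\delta}(u_\delta,v)=(\alpha,v)_{t\vp}$ and integrate by parts. The commutators $[\dbar,D^\sigma_\tau]$ and $[\dbars_{t\vp},D^\sigma_\tau]$ generate lower-order terms that one applies the strong closed range estimate to $D^\sigma_\tau u_\delta$ to absorb: the coercive term of size $tC_q\|D^\sigma_\tau u_\delta\|^2$ dominates the commutator error of size $O(t^2)\|D^\sigma_\tau u_\delta\|^2_{W^{-1}}$ plus terms of strictly lower tangential order, provided $t\geq T_s$ is large enough. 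Normal derivatives are then reinstated from the equation $\dbar\dbars_{t\vp} u_\delta + \dbars_{t\vp}\dbar u_\delta + \delta\Delta u_\delta = \alpha$ together with interior ellipticity of $\dbar + \dbars_{t\vp}$.

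\emph{Step 3: Harmonic projection.} The basic estimate implicit in the strong closed range hypothesis already forces $\H^q_{t\vp}$ to be finite dimensional, and each $h\in\H^q_{t\vp}$ satisfies $\dbar h = 0$ and $\dbars_{t\vp} h = 0$. Applying the regularized problem of Step 1 to $h$ together with the $\delta\to 0$ limit of Step 4 below shows $\H^q_{t\vp}\subset C^\infty(\overline\Om)$, so the finite-rank $L^2$-orthogonal projection $H^q_{t\vp}$ extends continuously to every $L^{2,s}_{0,q}(\Om,t\vp)$. Writing $N^q_{t\vp}=(I-H^q_{t\vp})N^q_{t\vp}(I-H^q_{t\vp})$ confines the Step 2 estimates to the orthogonal complement of $\H^q_{t\vp}$, where the strong closed range family is genuinely coercive after the $W^{-1}$ error is absorbed.

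\emph{Step 4: Limit and derived operators.} Weak compactness applied to the uniform bounds of Step 2 gives $u_\delta\rightharpoonup N^q_{t\vp}\alpha$ in $W^s$, establishing part (i). The canonical solution operators and projections in (ii) and (iii) then follow from the Kohn decomposition $I = \dbar\dbars_{t\vp}N^q_{t\vp} + \dbars_{t\vp}\dbar N^q_{t\vp} + H^q_{t\vp}$, using that $\dbar$ and $\dbars_{t\vp}$ each lose one derivative while $N^q_{t\vp}$ gains two. The principal technical obstacle is Step 2: the commutator bookkeeping must remain of order at most $t^2$ so that the $tC_q$ coercive term in the strong closed range estimate truly dominates, which is delicate because $\Om$ is non-pseudoconvex (so the Morrey--Kohn--H\"ormander boundary integral is not coercive by itself) and because the tangential-derivative machinery has to be compatible simultaneously with $\Dom(\dbars_{t\vp})$ and with the extra Dirichlet-type condition introduced by the $\delta$-regularization.
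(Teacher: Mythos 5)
Your plan correctly identifies elliptic regularization plus tangential commutator estimates as the mechanism, and Steps 1, 2, and 4 track the general contour of the paper's Theorem \ref{thm:regularization} and Proposition \ref{prop:SCRE imply regularity at s}. However, there is a genuine gap in Step 3, and the regularization you propose is missing a crucial second parameter that the paper introduces precisely to deal with it.

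The paper's quadratic form is
\[
Q_\phi^{\delta,\nu}(u,v)=(\dbar u,\dbar v)_\phi+(\dbars_\phi u,\dbars_\phi v)_\phi+\delta(\nabla_T u,\nabla_T v)_\phi+\nu(u,v)_\phi,
\]
with \emph{both} a $\delta$ (elliptic) term and a $\nu$ (zeroth-order) term, and the $\delta$-term uses the \emph{tangential} gradient $\nabla_T$, not the full gradient, so that $\Dom(\Box_\phi^{\delta,\nu})=L^{2,2}_{0,q}\cap\Dom(\Box_\phi)$ and the boundary conditions do not shift as $\delta\to 0$. The $\nu$-term is not decorative: it yields the uniform-in-$\delta$ bound $\nu\|u\|^2_{L^2}\leq Q^{\delta,\nu}_\phi(u,u)$, which is what lets you pass $\delta\to 0$ with $\nu>0$ fixed; only afterward do you send $\nu\to 0$, and doing so requires controlling the blow-up of $N^{0,\nu;q}_\phi$ in the directions of $\H_{0,q}(\Om,\phi)$. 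On a pseudoconvex domain there are no harmonic $(0,q)$-forms and one can skip this, but the paper's whole point is that the domain is not pseudoconvex, so harmonic forms are potentially present.

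This is exactly where your Step 3 breaks down. You claim ``Applying the regularized problem of Step 1 to $h$ together with the $\delta\to 0$ limit of Step 4 below shows $\H^q_{t\vp}\subset C^\infty(\overline\Om)$,'' but a harmonic form $h$ satisfies $\Box_{t\vp}h=0$, so $\Box^{\delta}_{t\vp}h=\delta\Delta_T h$ (or $\delta\Delta h$ in your version) is a priori only a distribution with no useful regularity, and $h$ need not lie in $\Dom(\Box^\delta_{t\vp})$; there is no equation for $h$ to which the $\delta$-regularized Neumann operator can be applied, and citing the not-yet-proved $\delta\to 0$ limit is circular. The paper instead proves $\H_{0,q}(\Om,\phi)\subset L^{2,s_0}$ by a delicate inductive construction: pick smooth data $\alpha$ orthogonal to the already-regularized part of the harmonic space but not orthogonal to the next harmonic form; show that $\|N^{0,\nu;q}_\phi\alpha\|_{L^2}\to\infty$ as $\nu\to 0$ (if it stayed bounded you would get a solution of $Q_\phi(u,\psi)=(\alpha,\psi)$ contradicting $(\alpha,\theta_{k+1})\neq 0$); normalize $w_m=N^{0,\nu_m;q}_\phi\alpha/\|N^{0,\nu_m;q}_\phi\alpha\|_{L^2}$; use the uniform $W^{s_0}$ bound from \eqref{est:G0nu} and the compactness of $W^{s_0}\hookrightarrow L^2$ to extract a limit $\theta$ with $\|\theta\|_{L^2}=1$; and then verify $Q_\phi(\theta,\theta)=0$ and $\theta\perp\theta_1,\dots,\theta_k$. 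Without this mechanism, or some replacement for it, you cannot conclude that $H^q_{t\vp}$ is continuous on $L^{2,s}$, and therefore you cannot perform the reduction to the orthogonal complement of $\H^q_{t\vp}$ on which your Step 2 estimates actually close.

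A secondary point: in the part of Step 2 where you invoke the strong closed range estimate, the term $C_t\|D^\sigma_\tau u_\delta\|^2_{W^{-1}}$ is not absorbed by the coercive term; rather, since $D^\sigma_\tau$ is an order-$|\sigma|$ operator, it is bounded by $C_t\|u_\delta\|^2_{W^{|\sigma|-1}}$, and one then interpolates $\|u_\delta\|^2_{W^{|\sigma|-1}}\leq\eps\|u_\delta\|^2_{W^{|\sigma|}}+C_\eps\|u_\delta\|^2_{L^2}$ and absorbs only the small $\eps$-piece; the $C_\eps\|u_\delta\|^2_{L^2}$ piece persists and is ultimately controlled by the $L^2$ theory. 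This matters because it is precisely why the constants in the final estimate are allowed to depend on $t$ (exact regularity rather than $t$-uniform estimates).
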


\begin{rem}
  Note that we also obtain estimates for the weighted Bergman projections $P^q_{t\varphi}=I-\dbars_{t\varphi}\dbar N^q_{t\varphi}$ and $P^{q-1}_{t\varphi}=I-\dbars_{t\varphi} N^q_{t\varphi}\dbar$, as well as the combined projections $P^q_{t\varphi}+H^q_{t\varphi}=I-\dbar\dbars_{t\varphi} N^q_{t\varphi}$ and $P^{q+1}_{t\varphi}+H^{q+1}_{t\varphi}=I-\dbar N^q_{t\varphi} \dbars_{t\varphi}$.
\end{rem}

\begin{rem}
  We can also obtain estimates for the projection $N^q_{t\varphi}\dbar\dbars_{t\varphi}$ (resp.\\ $N^q_{t\varphi}\dbars_{t\varphi}\dbar$), but note that this is equal to the restriction of $\dbar\dbars_{t\varphi} N^q_{t\varphi}$ (resp., $\dbars_{t\varphi}\dbar N^q_{t\varphi}$) to the space of forms $u\in\dom\dbars_{t\varphi}$ such that $\dbars_{t\varphi} u\in\dom\dbar$ (resp., $u\in\dom\dbar$ such that $\dbar u\in\dom\dbars_{t\varphi}$).  The argument in \cite[(18)-(20)]{HaRa11} proves this for the complex Green operator, but the argument is the same.
\end{rem}

In many instances where we can establish a closed range estimate (e.g., \cite{Hor65}, \cite{Sha85}, \cite{HaRa15}, \cite{ChHa18}), there is also sufficient information to prove that the space of
harmonic $(0,q)$-forms
$\H_{0,q}(\Om)=\{0\}$ (the $q=n-1$ case on the annulus being a notable exception,
as $\dbar$ has closed range but the space of harmonic forms is infinite dimensional \cite{Hor04}),
hence the hypothesis in the next corollary is well-motivated.
\begin{cor}\label{cor: no harmonic forms, solvability in C^infty}
Let $\Om\subset\C^n$ be a bounded smooth domain which admits the family of strong closed range estimates \eqref{eq:temp_strong_closed_range_2}
for some smooth function $\varphi$. Then
\begin{enumerate}[1.]
\item $L^{2,m}_{0,q}(\Omega)\cap\ker\dbar$ is dense in $L^{2,s}_{0,q}(\Om)\cap\ker\dbar$ for any $m>s\geq 0$.
\item If, in addition, $\H_{0,q}(\Om) = \H_{0,q}(\Om,t\varphi) = \{0\}$, then the $\dbar$-problem is solvable
in $C^\infty_{0,\tilde q}(\bar\Om)$ if $\tilde q = q$ or $q-1$. Namely, if $f \in C^\infty_{0,\tilde q+1}(\bar\Om)$ is $\dbar$-closed, then
then there exists $u\in C^\infty_{0,\tilde q}(\bar\Om)$ so that $\dbar u = f$.
\end{enumerate}
\end{cor}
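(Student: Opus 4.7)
The plan is to combine the weighted Sobolev boundedness of the canonical operators from Theorem~\ref{thm:regularity of B and sol'n ops} (and the projection identities in the remark following it) with standard Hodge theory and Sobolev embedding. Because $\vp\in C^\infty(\overline\Om)$ is bounded, the weighted Sobolev spaces $L^{2,s}_{0,\bullet}(\Om,t\vp)$ and the unweighted spaces $L^{2,s}_{0,\bullet}(\Om)$ coincide with equivalent norms, so all estimates transfer freely between them.

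For part~1, the vehicle is the weighted Bergman projection $P^q_{t\vp}=I-\dbars_{t\vp}\dbar N^q_{t\vp}$, i.e., the orthogonal projection of $L^2_{0,q}(\Om,t\vp)$ onto $\ker\dbar$. Given $f\in L^{2,s}_{0,q}(\Om)\cap\ker\dbar$, choose $t=T_m$ large enough that Theorem~\ref{thm:regularity of B and sol'n ops} provides continuity of $P^q_{t\vp}$ on $L^{2,s'}$ for every $s'\le m$, and pick $f_j\in C^\infty_{0,q}(\overline\Om)\subset L^{2,m}_{0,q}(\Om)$ with $f_j\to f$ in $L^{2,s}$. Then $P^q_{t\vp}f_j$ lies in $L^{2,m}_{0,q}(\Om)\cap\ker\dbar$ and converges in $L^{2,s}$ to $P^q_{t\vp}f=f$, which is the claimed density.

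For part~2, I would handle the two cases in parallel. When $\tilde q=q-1$, take $u=\dbars_{t\vp}N^q_{t\vp}f$ for $t$ sufficiently large; Theorem~\ref{thm:regularity of B and sol'n ops} places $u\in L^{2,s}_{0,q-1}(\Om)$ for every $s\ge 0$, so $u\in C^\infty_{0,q-1}(\overline\Om)$ by Sobolev embedding. The Hodge decomposition
\[
f=\dbar\dbars_{t\vp}N^q_{t\vp}f+\dbars_{t\vp}\dbar N^q_{t\vp}f+H^q_{t\vp}f,
\]
together with $\dbar f=0$ (which forces the middle term to vanish by orthogonality of $\range(\dbar)$ and $\range(\dbars_{t\vp})$) and the hypothesis $\H_{0,q}(\Om,t\vp)=\{0\}$ (which kills the last term), produces $\dbar u = f$. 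When $\tilde q=q$, one works one form degree higher via the identity $P^{q+1}_{t\vp}+H^{q+1}_{t\vp}=I-\dbar N^q_{t\vp}\dbars_{t\vp}$ from the remark after Theorem~\ref{thm:regularity of B and sol'n ops}; the candidate potential is $u=N^q_{t\vp}\dbars_{t\vp}f$, whose Sobolev regularity again comes from Theorem~\ref{thm:regularity of B and sol'n ops}.

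The main difficulty lies in the $\tilde q=q$ case. A smooth form $f$ on $\overline\Om$ need not lie in $\Dom(\dbars_{t\vp})$, so the composite $N^q_{t\vp}\dbars_{t\vp}$ must be interpreted through the bounded projection $I-P^{q+1}_{t\vp}-H^{q+1}_{t\vp}$ rather than as a literal composition of unbounded operators. In addition, passing from $\dbar N^q_{t\vp}\dbars_{t\vp}f=f$ back to $\dbar u = f$ requires the $(0,q+1)$-level harmonic contribution to vanish, which I would establish by combining closed range of $\dbar\colon L^2_{0,q}\to L^2_{0,q+1}$ (the dual formulation of the strong closed range estimate at level $q$) with the hypothesis $\H_{0,q}(\Om,t\vp)=\{0\}$ and $\dbar f=0$. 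Once these two points are dispatched, Sobolev embedding once more supplies smoothness up to the boundary.
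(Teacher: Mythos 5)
Your argument for part~1 is correct and tracks the paper: both apply the weighted Bergman-type projection $I-\dbars_{t\vp}\dbar N^q_{t\vp}$ to a smooth approximation of $f$ in $L^{2,s}$, invoking Theorem~\ref{thm:regularity of B and sol'n ops} for boundedness in the relevant Sobolev norms.

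Part~2 has a genuine gap, and it is not the one you flagged. You write that for ``$t$ sufficiently large'' the solution $u=\dbars_{t\vp}N^q_{t\vp}f$ lies in $L^{2,s}_{0,q-1}(\Om)$ for \emph{every} $s\geq 0$, hence is $C^\infty$ up to the boundary. Read the quantifiers in Theorem~\ref{thm:regularity of B and sol'n ops}: for each $k$ there is a threshold $T_k$, and for $t\geq T_k$ one gets boundedness only for $0\leq s\leq k$. For any fixed $t$ the exact-regularity range is capped at some finite $k(t)$, and the operators $\dbars_{t\vp}N^q_{t\vp}$ change as $t$ changes, so the candidate solutions $u_t$ change too. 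There is no single $t$ -- and hence no single $u_t$ -- that you can declare to be in $C^\infty(\overline\Om)$ from the theorem alone. This is precisely the obstruction that the paper's cited argument (Theorem 6.1.1 in Chen--Shaw) is designed to circumvent: one constructs a sequence of solutions $u_k\in L^{2,k}$ using increasing weights $t_k$, observes that consecutive differences $u_{k+1}-u_k$ are $\dbar$-closed, approximates those differences in lower Sobolev norms by $\dbar$-closed forms of higher regularity (this is exactly where your part~1 density statement enters), and telescopes so that the corrected sequence converges in every $L^{2,s}$ simultaneously to a single $C^\infty$ solution. Your proof omits this step entirely, and without it the claim does not follow. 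The same issue appears in your $\tilde q=q$ case via $N^q_{t\vp}\dbars_{t\vp}$. (Your worry about making sense of $N^q_{t\vp}\dbars_{t\vp}$ on smooth forms and about the $(0,q+1)$-level harmonic contribution are real but secondary; the order-of-quantifiers problem is what breaks the argument as written.)
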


%
%
\section{Notation}\label{sec:notation}

\subsection{\texorpdfstring{$L^2$}{L2} spaces}
Let $\Om\subset\C^n$ be a bounded, $C^m$ domain with $C^m$ defining function $\rho$, $m\geq 2$. Let $\vp$ be a $C^2$ function defined near the closure of $\Om$.
We denote the $L^2$-inner product on $L^2(\Omega,e^{-\phi})$ by
\[
(f,g)_{L^2(\Om,\vp)} = \int_\Omega f \bar g\, e^{-\vp} dV.
\]
We denote
the induced surface area measure on $\bd\Omega$ by $d\sigma$. Also $\| f \|_{L^2(\Om,\vp)}^2 = \int_\Omega |f|^2 e^{-\vp}\, dV$ and if $\vp=0$, we suppress the $\vp$ in the norm.

\subsection{The \texorpdfstring{$\dbar$}{dbar} operator}
Let $\I_q = \{ (i_1,\dots,i_q)\in \N^n : 1 \leq i_1 < \cdots < i_q\leq n \}$. For $I\in\I_{q-1}$, $J\in\I_q$, and $1\leq j \leq n$, let
$\ep^{jI}_{J} = (-1)^{|\sigma|}$ if $\{j\} \cup I = J$ as sets and $|\sigma|$ is the length of the permutation that takes $\{j\}\cup I$ to $J$. Set
$\ep^{jI}_J=0$ otherwise. We use the standard notation that if $u = \sum_{J\in\I_q} u_J\, d\z_J$, then
\[
u_{jI} = \sum_{J\in\I_q} \ep^{jI}_J u_J.
\]

The $\dbar$-operator on $(0,q)$-forms is defined as follows: $\dbar:L^2_{0,q}(\Omega, e^{-t|z|^2}) \to L^2_{0,q+1}(\Omega, e^{-t|z|^2})$ and if
$f = \sum_{J\in\I_q} f_J\, d\z_J$, then
\[
\dbar f = \sum_{\atopp{J\in\I_q}{K\in\I_{q+1}}}\sum_{k=1}^n \ep^{kJ}_K \frac{\p f_J}{\p\z_k}\, d\z_K
\]
We let $\dbars_\vp$ denote the $L^2$-adjoint of $\dbar$ in $L^2_{0,q}(\Om,\vp)$ and denote the weighted $\dbar$-Neumann Laplacian by $\Box_\vp = \dbar\dbars_\vp + \dbars_\vp\dbar$.
If it exists, the inverse to $\Box_\vp$ on $(0,q)$-forms on the orthogonal complement to $\ker\Box_\vp$ is called the \emph{$\dbar$-Neumann operator} and is denoted by $N^\vp_q$.

We use the notation $\H_{0,q}(\Om,\vp)$ for the space
of \emph{$L^2_{0,q}(\Om,\vp)$-harmonic forms}, that is, $\H_{0,q}(\Om,\vp) = \ker(\dbar)\cap\ker(\dbars_\vp)$. We also let $H^q_\vp : L^2_{0,q}(\Om,\vp) \to \H_{0,q}(\Om,\vp)$ denote the orthogonal projection.

\subsection{CR geometry}
The induced CR-structure on $\bd\Omega$  at $z\in\bd\Omega$ is
\[
T^{1,0}_z(\bd\Omega)  = \{ L\in T^{1,0}(\C) : \p\rho(L)=0 \},
\]
where $\rho$ is an arbitrary $C^1$ defining function for $\Omega$.  We denote the exterior algebra generated by these spaces by $T^{p,q}(\bd\Omega)$ and its dual by $\Lambda^{p,q}(\bd\Omega)$.
If we normalize $\rho$ so that $|d\rho|=1$ on $\bd\Omega$, then the \emph{normalized Levi form $\opL$} is the real element of $\Lambda^{1,1}(\bd\Omega)$ defined by
\[
\opL(-i L\wedge \LL) = i\p\dbar\rho(-iL\wedge\LL)
\]
for any $L\in T^{1,0}(\bd\Omega)$.

In the case that $U$ is a small neighborhood of (say) 0, and we write $\Om\cap U$
\[
\Om \cap U = \{(z',x_n+iy_n)\in\C^{n-1}\times\C : y_n > \rho_1(z',x_n)\},
\]
where $\rho_1$ is a $C^2$ function satisfying $\rho_1(0)=0$ and $\nabla\rho_1(0)=0$, then we can identify the normalized Levi form at 0
with the $(n-1)\times(n-1)$ matrix $\big(\frac{\p^2\rho_1}{\p z_j\p \z_k}(0)\big)$ (see \eqref{eq:normalized_hessian} below).

\subsection{\texorpdfstring{$L^2$}{L2} Sobolev spaces}
We define a Sobolev $W^1$ norm that is adapted to the theory for the weighted $\dbar$-Neumann operator.  For $f\in C^\infty_0(\Omega)$ we define
\[
  \norm{f}_{W^1(\Omega,\varphi)}^2=\norm{f}^2_{L^2(\Om,\vp)}+\sum_{j=1}^n\norm{\frac{\partial f}{\partial\bar z_j}}^2_{L^2(\Om,\vp)}
  +\sum_{j=1}^n\norm{e^{\varphi}\frac{\partial}{\partial z_j}\left(e^{-\varphi}f\right)}^2_{L^2(\Om,\vp)}.
\]
As usual, we define $W^1_0(\Omega,\varphi)$ to be the completion of $C^\infty_0(\Omega)$ with respect to this norm.  Note that if we integrate by parts in the $L^2(\Omega,\varphi)$ norm, we obtain the adjoint relation
\[
  \left(\frac{\partial}{\partial\bar z_j}\right)^*_{\varphi}=-e^{\varphi}\frac{\partial}{\partial z_j}\left(e^{-\varphi}\right)=-\frac{\partial}{\partial z_j}+\frac{\partial\varphi}{\partial z_j}.
\]
This motivates the decomposition used in our definition of $W^1(\Omega,\varphi)$.  On bounded domains (or, more generally, domains on which $\varphi$ and $|\nabla\varphi|$ are uniformly bounded), $W^1_0(\Omega)=W^1_0(\Omega,\varphi)$.  On unbounded domains, the theory for such norms has been studied extensively in \cite{HaRa14} and \cite{Has17}, for example.  We now define $W^{-1}(\Omega,\varphi)$ to be the dual of $W^1_0(\Omega,\varphi)$ with respect to $L^2(\Omega,\varphi)$.

We let $L^{2,k}(\Om,\vp)$ denote the usual weighted $L^2$-Sobolev spaces, namely,
\[
\|f\|_{L^{2,k}(\Om,\vp)}^2 = \sum_{|\alpha|\leq k} \|D^\alpha \vp\|_{L^2(\Om,\vp)}^2.
\]
It is the case that $W^1(\Om,\vp) = L^{2,1}(\Om,\vp)$. It is convenient to use $L^{2,k}(\Om,\vp)$ in the elliptic regularization and hence in the proof of Theorem \ref{thm:regularization}, however
we prefer to use $W^1(\Om,\vp)$ in Lemma \ref{lem:SCRE_relationships} because it produces the most refined estimates.

\subsection{Estimates for the \texorpdfstring{$\dbar$}{dbar}-operator}

\begin{defn}
\label{defn:strong_closed_range}
  Let $\Omega\subset\mathbb{C}^n$ be a domain with $C^2$ boundary.  We say that $\Omega$ admits
 a \emph{family of strong closed range estimates} for some $1\leq q\leq n-1$ if there exists a weight function $\varphi\in C^2(\overline\Omega)$ and constants $C_q>0$ and $t_0>0$ such that for every $t\geq t_0$ there exists a cutoff function $\chi_t\in C^\infty_0(\Omega)$ such that $\lim_{t\rightarrow\infty}\frac{\norm{\chi_t}^2_{C^1(\Omega)}}{t^3}=0$ and
        \begin{equation}
        \label{eq:strong_closed_range}
          \norm{\dbar f}^2_{L^2(\Om,t\vp)}+\norm{\dbar^*_{t\varphi} f}^2_{L^2(\Om,t\vp)}+\norm{\chi_t f}^2_{L^2(\Om,t\vp)}\geq t C_q\norm{f}^2_{L^2(\Om,t\vp)}
        \end{equation}
        for all $f\in L^2_{(0,q)}(\Omega)\cap\dom\dbar\cap\dom\dbar^*_{t\varphi}$.  For $p\in \partial\Omega$, we say that $\Omega$ admits a \emph{family of strong closed range estimates near $p$} if, in addition to the above, there exists a family of open neighborhoods $U_t$ of $p$ such that $\lim_{t\rightarrow\infty}t\sup_{z\in U_t}|z-p|^2=\infty$ and \eqref{eq:strong_closed_range} holds for all $f\in L^2_{(0,q)}(\Omega)\cap\dom\dbar\cap\dom\dbar^*_{t\varphi}$ supported in $U_t\cap\overline\Omega$.
\end{defn}
\begin{rem}
  We could also define a family of strong closed range estimates for $(p,q)$-forms with $1\leq p\leq n$, but the presence of $p>0$ does not impact the theory in any way, so we omit this case.
\end{rem}

Closed range, in general, is not a local property.  However, we note that strong closed range estimates localize in the following sense:
\begin{lem}
  Let $\Omega\subset\mathbb{C}^n$ be a bounded domain with $C^2$ boundary.  For some $1\leq q\leq n-1$, $\Omega$ admits a family of strong closed range estimates if and only if $\Omega$ admits a family of strong closed range estimates for every $p\in\partial\Omega$.
\end{lem}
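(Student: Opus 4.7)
For the forward direction ($\Rightarrow$), given a global family with data $(\varphi,\chi_t,C_q,t_0)$, for each $p\in\partial\Omega$ I would take $U_t$ to be any fixed open neighborhood of $p$, say $U_t=B(p,1)$. Then $t\sup_{z\in U_t}|z-p|^2=t\to\infty$, and the global inequality \eqref{eq:strong_closed_range}, which holds for all admissible $f$, restricts immediately to those $f$ supported in $U_t\cap\overline\Omega$. This direction is essentially trivial.

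For the backward direction ($\Leftarrow$), my plan is a partition-of-unity argument. Starting from local data $(\varphi_p,\chi_{p,t},C_{q,p},U_{p,t})$ at each $p\in\partial\Omega$: the growth condition guarantees that I can choose balls $B(p,r_{p,t})\subset U_{p,t}$ with $tr_{p,t}^2\to\infty$. I would use compactness of $\partial\Omega$ to extract finitely many points $p_1,\ldots,p_N$ so that $\bigcup_{j=1}^N B(p_j,r_{j,t})$ covers a neighborhood of $\partial\Omega$ for all large $t$, complete the cover of $\overline\Omega$ with an interior set $V_0\Subset\Omega$, and choose a (possibly $t$-dependent) smooth partition of unity $\{\eta_j^t\}_{j=0}^N$ with $\sum_j(\eta_j^t)^2=1$ and $\|\nabla\eta_j^t\|_\infty\les r_{j,t}^{-1}$. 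For $f\in L^2_{(0,q)}(\Omega)\cap\dom\dbar\cap\dom\dbars_{t\varphi}$, I apply the local estimate to each $\eta_j^t f$ for $j\geq 1$ (whose support lies in $U_{p_j,t}$); the interior piece $\eta_0^t f$ is handled by including $\sqrt{tC_q}\,\eta_0^t$ directly in the global cutoff. Summing over $j$, using $\|f\|^2=\sum_j\|\eta_j^t f\|^2$ and expanding $\dbar(\eta_j^t f)$ and $\dbars_{t\varphi}(\eta_j^t f)$ by the Leibniz rule recovers \eqref{eq:strong_closed_range} modulo commutator errors bounded by $\max_j r_{j,t}^{-2}\|f\|^2$, which get absorbed into the new global cutoff $\chi_t$.

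The required quantitative check is that the new cutoff satisfies the growth bound $\|\chi_t\|_{C^1}^2=o(t^3)$. My candidate cutoff satisfies $\|\chi_t\|_{C^1}^2\les\max_j\|\chi_{p_j,t}\|_{C^1}^2+tC_q\|\eta_0^t\|_{C^1}^2+\max_j r_{j,t}^{-2}$; the first term is $o(t^3)$ by hypothesis on the local data, the second is $O(t)=o(t^3)$ for a $t$-independent $\eta_0$, and the third is $O(t)$ since $r_{j,t}^{-2}=o(t)$ by $tr_{j,t}^2\to\infty$.

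The hard part will be combining the possibly distinct local weights $\varphi_p$ into a single global weight $\varphi$: I plan to patch via $\varphi=\sum_j\eta_j\varphi_{p_j}$ subordinate to a fixed ($t$-independent) finite cover, and then argue that the additional first-order terms appearing in $\dbars_{t\varphi}$ on $\supp\eta_j$ are bounded, hence contribute only lower-order corrections absorbable into $\chi_t$. A secondary subtlety is the possibly genuine $t$-dependence of $U_{p,t}$, which forces the partition of unity itself to be $t$-dependent; the quadratic scaling $tr_{j,t}^2\to\infty$ built into the local definition is precisely what ensures that the commutator errors $r_{j,t}^{-2}$ are of lower order than the factor $t$ on the right-hand side of \eqref{eq:strong_closed_range}, which is the matching that makes the localization work.
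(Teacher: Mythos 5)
Your strategy --- a $t$-dependent partition of unity near $\partial\Omega$, with the quantitative match between $r_{j,t}^{-2}=o(t)$ and the $tC_q$ factor --- is the same as the paper's, so the core idea is right, but two steps as written would fail. First, the commutator errors from the Leibniz rule cannot be folded into the new $\chi_t$: since each $p_j\in\partial\Omega$, the support of $\nabla\eta_j^t$ (an annulus about $p_j$ of width $\sim r_{j,t}$) meets $\partial\Omega$, while $\chi_t$ must belong to $C^\infty_0(\Omega)$ and hence vanish near $\partial\Omega$. The errors are $o(t)\norm{f}^2$, and the correct move (which the paper makes) is to absorb them into the main term by shrinking $C_q$ slightly for $t$ large. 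Second, the weight-patching in your final paragraph is not a lower-order correction: if $\varphi\neq\varphi_{p_j}$ on $\supp\eta_j^t$, then $\dbars_{t\varphi}-\dbars_{t\varphi_{p_j}}$ is a zeroth-order operator of size $O\big(t\,\abs{\nabla(\varphi-\varphi_{p_j})}\big)$ --- the same order $t$ as the target estimate --- and the measures $e^{-t\varphi}\,dV$ and $e^{-t\varphi_{p_j}}\,dV$ differ by a factor that grows exponentially in $t$; neither discrepancy can be absorbed by taking $t$ large or by adjusting $\chi_t$.

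The resolution to the second point is that the paper's proof (which never mentions multiple weights) tacitly works with a \emph{single} weight $\varphi$ and constants $C_q$, $t_0$ shared by all $p\in\partial\Omega$, so that only $U_t$ and $\chi_t$ depend on $p$; this is the reading under which the lemma actually follows from a partition-of-unity argument. With the weight-patching issue dropped and the commutator errors redirected into the main term rather than into $\chi_t$, your argument coincides with the paper's.
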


\begin{proof}
To see that global estimates imply local estimates, we simply let $U_t$ be a neighborhood of $\overline\Omega$ that is independent of $t$.  For the converse, let $\psi\in C^\infty(\mathbb{R})$ be a non-decreasing function such that $\psi(x)=0$ for all $x\leq 0$ and $\psi(x)=1$ for all $x\geq 1$.  For $r>0$ and $p\in\mathbb{C}^n$, set $\tilde\xi(z)=\psi\left(\frac{2(r^2-|z-p|^2)}{r^2}\right)$.  Then $\supp\tilde\xi=\overline{B(p,r)}$, $\tilde\xi\equiv 1$ in a neighborhood of $p$, and $|\nabla\tilde\xi|\leq O(r^{-1})$.

Cover $\partial\Omega$ with a finite collection of neighborhoods $U_{j,t}$ satisfying the local definition of strong closed range estimates with cutoff functions $\chi_{j,t}$.  We may assume that $U_{j,t}=B(p_{j,t},r_{j,t})$ where $r_{j,t}^{-2}\leq o(t)$.  If we let $\tilde\xi_{j,t}$ denote the cutoff function defined in the previous paragraph for $B(p_{j,t},r_{j,t})$, then $\xi_{j,t}=\frac{\tilde\xi_{j,t}}{\sum_k\tilde\xi_{k,t}}$ defines a partition of unity in some neighborhood of $\partial\Omega$ satisfying $|\nabla\xi_{j,t}|^2\leq o(t)$.  Hence, if we set $f_{j,t}=f\xi_{j,t}$, then
  \[
    \abs{\dbar f_{j,t}}^2\leq 2\abs{\xi_{j,t}\dbar f}^2+o(t\abs{f}^2).
  \]
  Thus, we may decompose $f=\sum_j f_j$, apply \eqref{eq:strong_closed_range} to each $f_j$, and patch the resulting estimates with error terms that can be absorbed by taking $t$ sufficiently large.  We complete the partition of unity of $\overline\Omega$ using $\xi_{0,t}=1-\sum_j\xi_{j,t}$, and note that we can choose $\chi_t$ to be a constant multiple of $\sqrt{\sum_j\chi_{j,t}^2\xi_{j,t}^2+t C_q\xi_{0,t}^2}$.
\end{proof}

\begin{defn}
\label{defn:compactness_estimate}
  Let $\Omega\subset\mathbb{C}^n$.  We say that $\Omega$ admits a \emph{compactness estimate} for some $1\leq q\leq n$ if for every $\eps>0$ there exists a constant $C_\eps>0$ such that
        \begin{equation}
        \label{eq:compactness}
          \eps\big(\norm{\dbar f}_{L^2(\Om)}^2+\norm{\dbar^* f}_{L^2(\Om)}^2\big)+C_\eps\norm{f}^2_{W^{-1}(\Omega)}\geq \norm{f}_{L^2(\Om)}^2
        \end{equation}
        for all $f\in L^2_{(0,q)}(\Omega)\cap\dom\dbar\cap\dom\dbar^*$.
\end{defn}

We call this a compactness estimates because \eqref{eq:compactness} is equivalent to compactness of the $\bar\partial$-Neumann operator (see Proposition 4.2 in \cite{Str10}).

\section{Sufficient Conditions for Strong Closed Range Estimates}

In many settings, it is more natural to replace the term $\norm{\chi_t f}^2_{L^2(\Om,t\vp)}$ with a large multiple of the Sobolev norm $\norm{f}^2_{W^{-1}(\Omega,t\varphi)}$.  The families of estimates in Lemma \ref{lem:SCRE_relationships} are all candidates for our definition of strong closed range estimates; this lemma shows that the family we have chosen ($(4)$ in Lemma \ref{lem:SCRE_relationships}) is \textit{a priori} the weakest.

\begin{lem}
\label{lem:SCRE_relationships}
  Let $\Omega\subset\mathbb{C}^n$ be a bounded domain with Lipschitz boundary and Lipschitz defining function $\rho$.  Let $1\leq q\leq n-1$ and $\varphi\in C^2(\overline\Omega)$.  For the following families of estimates, we have $(1)\Rightarrow(2)\Rightarrow(3)\Rightarrow(4)$ and $(4)\Rightarrow (3)$.
  \begin{enumerate}
    \item There exist $C_q>0$ and $t_0>0$ such that for every $t\geq t_0$ there exists a cutoff function $\chi_t\in C^\infty_0(\Omega)$ such that $\lim_{t\rightarrow\infty}\frac{\norm{\chi_t}^4_{C^1(\Omega)}}{t^3}=0$ and
        \begin{equation}
        \label{eq:temp_strong_closed_range_1}
          \norm{\dbar f}^2_{L^2(\Om,t\vp)}+\norm{\dbar^*_{t\varphi} f}^2_{L^2(\Om,t\vp)}+\norm{\chi_t f}^2_{L^2(\Om,t\vp)}\geq t C_q\norm{f}^2_{L^2(\Om,t\vp)}
        \end{equation}
        for all $f\in L^2_{(0,q)}(\Omega)\cap\dom\dbar\cap\dom\dbar^*_{t\varphi}$.
    \item There exist $C_q>0$ and $t_0>0$ such that for every $t\geq t_0$ there exists a constant $C_t>0$ satisfying $\lim_{t\rightarrow\infty}\frac{C_t}{t^3}=0$ and
        \begin{equation}
        \label{eq:temp_strong_closed_range_2}
          \norm{\dbar f}^2_{L^2(\Om,t\vp)}+\norm{\dbar^*_{t\varphi} f}^2_{L^2(\Om,t\vp)}+C_t\norm{f}^2_{W^{-1}(\Omega,t\varphi)}\geq t C_q\norm{f}^2_{L^2(\Om,t\vp)}
        \end{equation}
        for all $f\in L^2_{(0,q)}(\Omega)\cap\dom\dbar\cap\dom\dbar^*_{t\varphi}$.
    \item There exist $C_q>0$ and $t_0>0$ such that for every $t\geq t_0$ there exists a constant $C_t>0$ satisfying $\lim_{t\rightarrow\infty}\frac{C_t}{t^3}=0$ and
        \begin{equation}
        \label{eq:temp_strong_closed_range_3}
          \norm{\dbar f}^2_{L^2(\Om,t\vp)}+\norm{\dbar^*_{t\varphi} f}^2_{L^2(\Om,t\vp)}+C_t\norm{(-\rho)f}^2_{L^2(\Om,t\vp)}\geq t C_q\norm{f}^2_{L^2(\Om,t\vp)}
        \end{equation}
        for all $f\in L^2_{(0,q)}(\Omega)\cap\dom\dbar\cap\dom\dbar^*_{t\varphi}$.
    \item There exist $C_q>0$ and $t_0>0$ such that for every $t\geq t_0$ there exists a cutoff function $\chi_t\in C^\infty_0(\Omega)$ such that $\lim_{t\rightarrow\infty}\frac{\norm{\chi_t}^2_{C^1(\Omega)}}{t^3}=0$ and
        \begin{equation}
        \label{eq:temp_strong_closed_range_4}
          \norm{\dbar f}^2_{L^2(\Om,t\vp)}+\norm{\dbar^*_{t\varphi} f}^2_{L^2(\Om,t\vp)}+\norm{\chi_t f}^2_{L^2(\Om,t\vp)}\geq t C_q\norm{f}^2_{L^2(\Om,t\vp)}
        \end{equation}
        for all $f\in L^2_{(0,q)}(\Omega)\cap\dom\dbar\cap\dom\dbar^*_{t\varphi}$.
  \end{enumerate}
\end{lem}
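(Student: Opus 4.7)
I would prove the lemma by working around the cycle $(1)\Rightarrow(2)\Rightarrow(3)\Rightarrow(4)\Rightarrow(3)$; the crux of each step is comparing the four ways in which one quantifies the ``lower-order'' piece that is to be absorbed into the $L^2(t\vp)$ norm on the right.

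For $(1)\Rightarrow(2)$ the key identity is $\|\chi_t f\|^2_{L^2(\Omega,t\vp)} = (f,\chi_t^2 f)_{L^2(\Omega,t\vp)}$. Because $\chi_t$ is compactly supported, $\chi_t^2 f \in W^1_0(\Omega,t\vp)$, so the pairing is bounded by $\|f\|_{W^{-1}(\Omega,t\vp)}\|\chi_t^2 f\|_{W^1(\Omega,t\vp)}$. I would then apply the Kohn--Morrey identity to the compactly supported form $\chi_t^2 f$: since there are no boundary contributions, one obtains
\[
\|\chi_t^2 f\|^2_{W^1(\Omega,t\vp)} \lesssim \|\chi_t\|_{C^1(\Omega)}^4\bigl(\|\dbar f\|^2_{L^2(\Om,t\vp)} + \|\dbars_{t\vp} f\|^2_{L^2(\Om,t\vp)} + t\|f\|^2_{L^2(\Om,t\vp)}\bigr),
\]
where the $\|\chi_t\|_{C^1}^4$ arises from two derivatives landing on $\chi_t^2$, and the $t\|f\|^2$ encodes the weight curvature term. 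An AM--GM split of $\|f\|_{W^{-1}}\cdot\|\chi_t^2 f\|_{W^1}$ with a small parameter $\epsilon$ absorbs the $\|\dbar f\|^2+\|\dbars_{t\vp}f\|^2+t\|f\|^2$ piece into the left-hand side of (1), leaving $C_t = C\|\chi_t\|_{C^1}^4$; the hypothesis $\|\chi_t\|_{C^1}^4/t^3\to 0$ is exactly what is needed for $C_t/t^3\to 0$, so (2) follows.

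For $(2)\Rightarrow(3)$ I would establish a weighted Hardy inequality $\|g/(-\rho)\|_{L^2(\Om,t\vp)} \leq C_\Omega \|g\|_{W^1(\Om,t\vp)}$ with $C_\Omega$ \emph{independent of $t$}, and then apply Cauchy--Schwarz to the identity $(f,g)_{L^2(t\vp)} = ((-\rho) f, g/(-\rho))_{L^2(t\vp)}$ to conclude $\|f\|_{W^{-1}(\Om,t\vp)}\leq C_\Omega\|(-\rho) f\|_{L^2(\Om,t\vp)}$. The Hardy inequality itself uses the substitution $h = g e^{-t\vp/2}$: a direct computation shows $\|g\|^2_{L^2(t\vp)} = \|h\|^2_{L^2}$ and, for real-valued $g$, the cross terms in the two twisted-derivative pieces cancel exactly, giving
\[
\|g\|^2_{W^1(\Om,t\vp)} = \|h\|^2_{L^2} + \tfrac{1}{2}\|\nabla h\|^2_{L^2} + \tfrac{t^2}{8}\bigl\|(\nabla\vp)\,h\bigr\|^2_{L^2},
\]
so $\|\nabla h\|^2 \leq 2\|g\|^2_{W^1(t\vp)}$, and the classical Hardy inequality $\|h/(-\rho)\|_{L^2}\leq C\|\nabla h\|_{L^2}$ closes the loop. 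The complex case reduces to the real one by splitting $g = g_1 + ig_2$, since the cross-type integrals produced by the decomposition integrate to real quantities by IBP.

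For $(3)\Rightarrow(4)$ I would construct the explicit cutoff $\chi_t(z) := \sqrt{C_t}(-\rho(z))\,\psi((-\rho(z))/\delta_t)$, where $\psi\in C^\infty([0,\infty))$ is a fixed function with $\psi\equiv 0$ on $[0,1]$ and $\psi\equiv 1$ on $[2,\infty)$, and $\delta_t^2 = tC_q/(8C_t)$. The crucial observation is that differentiating produces
\[
|\nabla\chi_t(z)| \;\leq\; \sqrt{C_t}\,|\nabla\rho(z)|\cdot\bigl|\psi(u) + u\psi'(u)\bigr|_{u=(-\rho)/\delta_t},
\]
and the function $u\mapsto \psi(u)+u\psi'(u)$ is uniformly bounded because $\psi'$ has compact support in $u$; hence $\|\chi_t\|_{C^1} \lesssim \sqrt{C_t}$ \emph{independently of} $\delta_t$, which gives $\|\chi_t\|^2_{C^1}/t^3\lesssim C_t/t^3\to 0$. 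Since $\chi_t^2\leq C_t\rho^2$ everywhere and $\chi_t^2 = C_t\rho^2$ on $\{(-\rho)\geq 2\delta_t\}$, a splitting yields $C_t\|(-\rho) f\|^2_{L^2(t\vp)} \leq \|\chi_t f\|^2_{L^2(t\vp)} + 4C_t\delta_t^2\|f\|^2_{L^2(t\vp)} = \|\chi_t f\|^2_{L^2(t\vp)} + \tfrac{tC_q}{2}\|f\|^2_{L^2(t\vp)}$, and substituting into (3) gives (4) with $C_q/2$ in place of $C_q$. For the reverse implication $(4)\Rightarrow(3)$, since $\chi_t\in C^1_0(\Omega)$ vanishes on $\partial\Omega$, the mean value theorem along the shortest path to the boundary gives the pointwise bound $|\chi_t(z)|\leq C_\Omega\|\chi_t\|_{C^1}(-\rho(z))$ for all $z\in\Omega$; squaring and integrating yields $\|\chi_t f\|^2 \leq C_\Omega^2\|\chi_t\|^2_{C^1}\|(-\rho)f\|^2$, and substituting into (4) produces (3) with $C_t = C_\Omega^2\|\chi_t\|^2_{C^1} = o(t^3)$.

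The main obstacle is the $t$-uniform weighted Hardy inequality needed in $(2)\Rightarrow(3)$: any loss of even $t^\alpha$ with $\alpha>0$ would degrade the exponent in $C_t/t^3\to 0$. The argument hinges essentially on the specific form of $W^1(\Omega,t\vp)$ in the paper's definition --- it is the use of $\|e^{t\vp}\partial_{z_j}(e^{-t\vp}g)\|_{L^2(t\vp)}$ rather than $\|\partial_{z_j}g\|_{L^2(t\vp)}$ which produces the exact cancellation of cross terms in the displayed identity above; replacing that norm by the naive weighted one would cost a factor of $t$.
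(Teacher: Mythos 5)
Your $(1)\Rightarrow(2)$, $(3)\Rightarrow(4)$, and $(4)\Rightarrow(3)$ steps are essentially the paper's arguments.  For $(3)\Rightarrow(4)$ you use a transition scale $\delta_t\sim\sqrt{t/C_t}$ where the paper uses $\delta\sim 1/t$; both make the error from the region where $\psi\neq 1$ equal to a fraction of $tC_q$, and both keep $\|\chi_t\|_{C^1}\lesssim\sqrt{C_t}$.  (A minor point with your choice: if $C_t$ stays bounded, $\delta_t\to\infty$ and $\chi_t\equiv 0$ for large $t$; the final inequality still holds, but it requires the separate observation that $C_t\|\rho\|^2_{\infty}\leq\tfrac12 tC_q$ eventually.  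The paper's $\delta\sim 1/t$ avoids this case split.)  For $(4)\Rightarrow(3)$ both you and the paper use $|\chi_t|\leq\|\chi_t\|_{C^1}\dist(\cdot,\partial\Omega)\lesssim(-\rho)$.

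The genuine gap is in your $(2)\Rightarrow(3)$ step, specifically the claim that the complex case ``reduces to the real one by splitting $g=g_1+ig_2$, since the cross-type integrals \dots integrate to real quantities by IBP.''  This is not correct.  The $W^1(\Omega,t\varphi)$ inner product is not additive under this splitting: for real $g_1,g_2$ one has $\|g\|^2_{W^1(t\varphi)}=\|g_1\|^2_{W^1(t\varphi)}+\|g_2\|^2_{W^1(t\varphi)}+2\operatorname{Im}\langle g_1,g_2\rangle_{W^1(t\varphi)}$, and a direct computation shows
\[
\operatorname{Im}\langle g_1,g_2\rangle_{W^1(t\varphi)}
=\frac{t}{2}\sum_{j=1}^n\int_\Omega g_2\,\bigl(\partial_{y_j}g_1\,\partial_{x_j}\varphi-\partial_{x_j}g_1\,\partial_{y_j}\varphi\bigr)\,e^{-t\varphi}\,dV,
\]
the cross terms from the $\bar\partial$ pieces and the $t^2$ pieces being real, but the order-$t$ ``symplectic bracket'' piece being not.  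Integrating by parts on these terms merely exhibits their antisymmetry in $(g_1,g_2)$; it does not show they vanish.  So you cannot conclude $\|g_1\|^2_{W^1(t\varphi)}+\|g_2\|^2_{W^1(t\varphi)}\lesssim\|g\|^2_{W^1(t\varphi)}$ from the real-valued case.  Also note that for complex $g$ your displayed identity $\|g\|^2_{W^1(t\varphi)}=\|h\|^2+\tfrac12\|\nabla h\|^2+\tfrac{t^2}{8}\|(\nabla\varphi)h\|^2$ fails for the same reason: the pointwise cancellation $|a+b|^2+|c-d|^2=2|a|^2+2|b|^2$ relies on $c=\bar a$, $d=\bar b$, which requires $h$ real.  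The paper sidesteps all of this by invoking Grisvard's theorem and the duality \eqref{eq:Grisvard} directly; your attempt to rederive the Hardy inequality is more ambitious and the complex case needs a genuine new idea or a separate argument (for example, one that shows the symplectic cross term can be absorbed into the remaining positive terms, or an application of Grisvard to $h=ge^{-t\varphi/2}$ that handles the $\nabla h$ terms without the exact identity).
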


\begin{rem}
A careful analysis of the proof reveals that the condition on $\chi_t$ in $(1)$ can be relaxed to
    \[
      \lim_{t\rightarrow\infty}\frac{\norm{\chi_t}^4_{L^\infty(\Omega)}}{t^3}=0\text{, }\sup_{\{t\geq t_0\}}\frac{\norm{\nabla\chi_t}^2_{L^\infty(\Omega)}}{t\norm{\chi_t}^2_{L^\infty(\Omega)}}<\infty,
    \]
    and the condition on $\chi_t$ in $(4)$ can be relaxed to
    \[
      \lim_{t\rightarrow\infty}\frac{\norm{\chi_t}^2_{L^\infty(\Omega)}}{t^3}=0\text{, }\sup_{\{t\geq t_0\}}\frac{\norm{\nabla\chi_t}^2_{L^\infty(\Omega)}}{\norm{\chi_t}^2_{L^\infty(\Omega)}}<\infty.
    \]
    This requires replacing $\norm{\chi_t}^4_{C^1(\Omega)}$ in \eqref{eq:chi_t_f_estimate} with $\norm{\chi_t}^4_{L^\infty(\Omega)}$.
\end{rem}

\begin{proof}
To see that $(1)$ implies $(2)$, we will need to use the interior regularity for the $\bar\partial$-Neumann problem.  Let $f\in C^1_{(0,q)}(\Omega)\cap\dom\dbar^*_{t\varphi}$.  By definition,
  \[
    \norm{\chi_t f}_{L^2(\Omega,t\varphi)}^2\leq\norm{\chi_t^2 f}_{W^1(\Omega,t\varphi)}\norm{f}_{W^{-1}(\Omega,t\varphi)}.
  \]
  For $\eps>0$ to be chosen later, a small constant/large constant estimate gives us
  \begin{equation}
  \label{eq:chi_t_f_estimate}
    \norm{\chi_t f}_{L^2(\Omega,t\varphi)}^2\leq\frac{\eps}{2\norm{\chi_t}^4_{C^1(\Omega)}}\norm{\chi_t^2 f}_{W^1(\Omega,t\varphi)}^2+\frac{\norm{\chi_t}^4_{C^1(\Omega)}}{2\eps}\norm{f}_{W^{-1}(\Omega,t\varphi)}^2.
  \end{equation}
  To estimate $\norm{\chi_t^2 f}_{W^1(\Omega,t\varphi)}^2$, we observe that integration by parts gives us
  \[
    \norm{\left(\frac{\partial}{\partial\bar z_j}\right)^*_{t\varphi}(\chi_t^2 f)}^2_{L^2(\Omega,t\varphi)}=\int_\Omega\left<\frac{\partial}{\partial\bar z_j}\left(\frac{\partial}{\partial\bar z_j}\right)^*_{t\varphi}(\chi_t^2 f),\chi_t^2 f\right>e^{-t\varphi}dV.
  \]
  Since
  \[
    \left[\frac{\partial}{\partial\bar z_j},\left(\frac{\partial}{\partial\bar z_j}\right)^*_{t\varphi}\right](\chi_t^2 f)=t\frac{\partial^2\varphi}{\partial z_j\partial\bar z_j}\chi_t^2 f,
  \]
  a second integration by parts will give us
  \begin{multline*}
    \norm{\left(\frac{\partial}{\partial\bar z_j}\right)^*_{t\varphi}(\chi_t^2 f)}^2_{L^2(\Omega,t\varphi)}=\\
    \norm{\frac{\partial}{\partial\bar z_j}(\chi_t^2 f)}^2_{L^2(\Omega,t\varphi)}+t\int_\Omega\left<\frac{\partial^2\varphi}{\partial z_j\partial\bar z_j}\chi_t^2 f,\chi_t^2 f\right>e^{-t\varphi}dV.
  \end{multline*}
  Hence,
  \begin{multline*}
    \norm{\chi_t^2 f}_{W^1(\Omega,t\varphi)}^2=\\
    \norm{\chi_t^2 f}_{L^2(\Omega,t\varphi)}^2+2\sum_{j=1}^n\norm{\frac{\partial}{\partial\bar z_j}(\chi_t^2 f)}^2_{L^2(\Omega,t\varphi)}+t\int_\Omega\left<\sum_{j=1}^n\frac{\partial^2\varphi}{\partial z_j\partial\bar z_j}\chi_t^2 f,\chi_t^2 f\right>e^{-t\varphi}dV
  \end{multline*}
  Since $\chi_t^2 f$ is compactly supported, we can use the Morrey-Kohn-H\"ormander identity (see Proposition 4.3.1 in \cite{ChSh01}, for example) with no boundary term to show
  \begin{multline*}
    \norm{\chi_t^2 f}_{W^1(\Omega,t\varphi)}^2\leq
    2\left(\norm{\dbar(\chi_t^2 f)}_{L^2(\Omega,t\varphi)}^2+\norm{\dbar^*_{t\varphi}(\chi_t^2 f)}_{L^2(\Omega,t\varphi)}^2\right)\\
    +O\left((1+t\norm{\varphi}_{C^2(\Omega)})\norm{\chi_t^2 f}_{L^2(\Omega,t\varphi)}^2\right).
  \end{multline*}
Calculating $\dbar(\chi_t^2 f)$ and $\dbars_{t\vp}(\chi_t^2 f)$ and using the inequality $(a+b)^2 \leq 2(a^2+b^2)$ yields the inequality
  \begin{multline*}
    \norm{\chi_t^2 f}_{W^1(\Omega,t\varphi)}^2\leq
    4\left(\norm{\chi_t^2\dbar f}_{L^2(\Omega,t\varphi)}^2+\norm{\chi_t^2\dbar^*_{t\varphi} f}_{L^2(\Omega,t\varphi)}^2\right)\\
    +O\left(\norm{\nabla\chi_t}_{L^\infty(\Omega)}^2\norm{\chi_t f}_{L^2(\Omega,t\varphi)}^2+(1+t\norm{\varphi}_{C^2(\Omega)})\norm{\chi_t^2 f}_{L^2(\Omega,t\varphi)}^2\right),
  \end{multline*}
  or
  \begin{multline*}
    \norm{\chi_t^2 f}_{W^1(\Omega,t\varphi)}^2\leq
    4\norm{\chi_t}^4_{L^\infty(\Omega)}\left(\norm{\dbar f}_{L^2(\Omega,t\varphi)}^2+\norm{\dbar^*_{t\varphi} f}_{L^2(\Omega,t\varphi)}^2\right)\\
    +O\left(\norm{\nabla\chi_t}_{L^\infty(\Omega)}^2\norm{\chi_t}_{L^\infty(\Omega)}^2+(1+t\norm{\varphi}_{C^2(\Omega)})\norm{\chi_t}^4_{L^\infty(\Omega)}\right)\norm{f}_{L^2(\Omega,t\varphi)}^2,
  \end{multline*}
  Substituting this into \eqref{eq:chi_t_f_estimate} and repeatedly using $\norm{\chi_t}_{L^\infty(\Omega)}\leq\norm{\chi_t}_{C^1(\Omega)}$ gives us
  \begin{multline*}
    \norm{\chi_t f}_{L^2(\Omega,t\varphi)}^2\leq 2\eps\left(\norm{\dbar f}_{L^2(\Omega,t\varphi)}^2+\norm{\dbar^*_{t\varphi} f}_{L^2(\Omega,t\varphi)}^2\right)\\
    +O\left(\frac{\eps}{2}\left(1+t\norm{\varphi}_{C^2(\Omega)}\right)\norm{f}_{L^2(\Omega,t\varphi)}^2\right)
    +\frac{\norm{\chi_t}^4_{C^1(\Omega)}}{2\eps}\norm{f}_{W^{-1}(\Omega,t\varphi)}^2.
  \end{multline*}
  We may choose $\eps>0$ sufficiently small so that
  \begin{multline*}
    \norm{\chi_t f}_{L^2(\Omega,t\varphi)}^2\leq \norm{\dbar f}_{L^2(\Omega,t\varphi)}^2+\norm{\dbar^*_{t\varphi} f}_{L^2(\Omega,t\varphi)}^2\\
    +\frac{1}{2}t C_q \norm{f}_{L^2(\Omega,t\varphi)}^2
    +\frac{\norm{\chi_t}^4_{C^1(\Omega)}}{2\eps}\norm{f}_{W^{-1}(\Omega,t\varphi)}^2.
  \end{multline*}
  Substituting this in \eqref{eq:temp_strong_closed_range_1} gives us \eqref{eq:temp_strong_closed_range_2} with $C_t=\frac{\norm{\chi_t}^4_{C^1(\Omega)}}{4\eps}$ and a new constant $\tilde C_q=\frac{1}{4}C_q$.  When $f\in\dom\dbar\cap\dom\dbar^*_{t\varphi}$, we use a standard density result (e.g., Proposition 2.3 in \cite{Str10}).

To see that $(2)$ implies $(3)$, we first recall that there exists a constant $C_\Omega>0$ such that $\norm{(-\rho)^{-1}g}_{L^2(\Omega,t\varphi)}\leq C_\Omega\norm{g}_{W^1(\Om,t\vp)}$ for all $g\in W^1_0(\Om,t\vp)$ (see Theorem 1.4.4.3 in \cite{Gri11}).  Now note that for any $f\in L^2(\Omega,t\varphi)$ such that $(-\rho)f\in L^2(\Omega,t\varphi)$, we have
\begin{equation}
\label{eq:Grisvard}
  \norm{f}_{W^{-1}(\Omega,t\varphi)}=\sup_{g\in W^1_0(\Omega,t\varphi),g\neq 0}\frac{\left<f,g\right>_{L^2(\Omega,t\varphi)}}{\norm{g}_{W^1(\Omega,\varphi)}}\leq C_\Omega\norm{(-\rho)f}_{L^2(\Om,t\vp)}.
\end{equation}

To see that $(3)$ implies $(4)$, we may assume that $\rho$ is a defining function for $\Omega$ that is smooth in the interior of $\Omega$, even if the boundary of $\Omega$ is only $C^2$.  Let $\psi\in C^\infty(\mathbb{R})$ denote a non-decreasing function satisfying $\psi(x)=0$ for all $x\leq 0$ and $\psi(x)=1$ for all $x\geq 1$.  Set
  \[
    \chi_t(z)=\sqrt{C_t}\psi\left(-t\rho(z)-1\right)(-\rho(z)).
  \]
  Then
  \[
    \nabla\chi_t(z)=-\sqrt{C_t}\left(\psi'\left(-t\rho(z)-1\right)t(-\rho(z))+\psi\left(-t\rho(z)-1\right)\right)\nabla\rho(z).
  \]
  Since $\psi'\left(-t\rho(z)-1\right)=0$ whenever $-\rho(z)\geq\frac{2}{t}$, we have $\norm{\chi_t}_{C^1(\Omega)}\leq O(\sqrt{C_t})$, and so
  \[
    \lim_{t\rightarrow\infty}\frac{\norm{\chi_t}^2_{C^1(\Omega)}}{t^3}\leq\lim_{t\rightarrow\infty}O\left(\frac{C_t}{t^3}\right)=0.
  \]
  Since $\psi\left(-t\rho(z)-1\right)\neq 1$ only when $-\rho(z)\leq\frac{2}{t}$, we have
  \begin{align*}
    C_t(-\rho(z))^2&=(\chi_t(z))^2+C_t\left(1-\left(\psi\left(-t\rho(z)-1\right)\right)^2\right)(-\rho(z))^2\\
    &\leq(\chi_t(z))^2+\frac{4 C_t}{t^2},
  \end{align*}
  so \eqref{eq:temp_strong_closed_range_3} implies
  \[
    \norm{\dbar f}^2_{L^2(\Om,t\vp)}+\norm{\dbar^*_{t\varphi} f}^2_{L^2(\Om,t\vp)}+\norm{\chi_t f}^2_{L^2(\Om,t\vp)}\geq\left(t C_q-\frac{4 C_t}{t^2}\right)\norm{f}^2_{L^2(\Om,t\vp)}.
  \]
  For $\tilde t_0>t_0$ sufficiently large, $\frac{1}{2}C_q\geq\frac{4 C_t}{t^3}$ for all $t\geq\tilde t_0$, so $\tilde C_q=\frac{1}{2}C_q$ satisfies $\tilde C_q\leq C_q-\frac{4 C_t}{t^3}$ whenever $t\geq\tilde t_0$, and \eqref{eq:temp_strong_closed_range_4} follows with these new constants $\tilde C_q$ and $\tilde t_0$.

To see that $(4)$ implies $(3)$, we note that since $\chi_t=0$ on $\partial\Omega$, we have
  \[
    \chi_t(z)\leq\norm{\chi_t}_{C^1(\Omega)}\dist(z,\partial\Omega)
  \]
  on $\Omega$.  Since $\dist(z,\partial\Omega)\leq-C_\rho\rho(z)$ on $\Omega$ for some constant $C_\rho>0$, we can let $C_t=\norm{\chi_t}_{C^1(\Omega)}^2C_\rho^2$ and obtain \eqref{eq:temp_strong_closed_range_3} from \eqref{eq:temp_strong_closed_range_4}.

\end{proof}

The motivation for our formulation of strong closed range estimates is the family of estimates that arise naturally in the study of domains with disconnected boundaries (e.g., annuli).  In the estimates constructed in, for example, \cite{Sha85a}, \cite{HaRa15}, or \cite{ChHa18}, different weight functions must be used in a neighborhood of each connected component of the boundary, so a cutoff function must be used to patch these functions together and obtain a global weight function.  This leads to estimates of the form
\[
  \norm{\dbar f}^2_{L^2(\Om,t\vp)}+\norm{\dbar^*_{t\varphi} f}^2_{L^2(\Om,t\vp)}+t C_\chi\norm{\varphi}_{C^2(\Omega)}\norm{\chi f}^2_{L^2(\Om,t\vp)}\geq t C_q\norm{f}^2_{L^2(\Om,t\vp)}
\]
for all $f\in\dom\dbar\cap\dom\dbar^*_{t\varphi}$ for some $\chi\in C^\infty_0(\Omega)$ and $C_\chi>0$.  If we let $\chi_t=\sqrt{t C_\chi\norm{\varphi}_{C^2(\Omega)}}\chi$, then we clearly obtain strong closed range estimates.  However, we also obtain the stronger formulation given by $(1)$ in Lemma \ref{lem:SCRE_relationships}, so in fact all of the families of estimates considered in Lemma \ref{lem:SCRE_relationships} can be obtained in this case.

%
%
\section{Necessary Conditions for Strong Closed Range Estimates}\label{sec:SCRE}

\begin{proof}[Proof of Theorem \ref{thm:main_theorem}]
The beginning of our argument is an adaptation of the argument of  Theorem 3.2.1 in \cite{Hor65}.  By Lemma \ref{lem:SCRE_relationships}, we may assume that we have estimates of the form \eqref{eq:temp_strong_closed_range_3}.

Fix $p\in\partial\Omega$.  After a translation and rotation, we may assume that $p=0$ and there exists some neighborhood $U$ of $p=0$ such that
\[
  \Omega\cap U=\set{z\in U:y_n>\rho_1(z',x_n)},
\]
where $z'=(z_1,\ldots,z_{n-1})$, $z_n=x_n+iy_n$, and $\rho_1$ is a $C^2$ function in some neighborhood of the origin that vanishes to second order at the origin.  Let $\tilde\delta$ denote the signed distance function for $\partial\Omega$.
By \cite[(2.9)]{HaRa13}, since $\abs{\nabla(\rho_1(z',\re z_n)-\im z_n)}=1+O(|z|)$, we have
\begin{equation}
\label{eq:normalized_hessian}
  \frac{\partial^2\tilde\delta}{\partial z_j\partial\bar z_k}(0)=\frac{\partial^2\rho_1}{\partial z_j\partial\bar z_k}(0)
\end{equation}
for all $1\leq j,k\leq n-1$.

Fix $s>0$ and define
\begin{equation}
\label{eq:L_defn_1}
  L_s(z'):=\sum_{j,k=1}^{n-1}\frac{\partial^2\rho_1}{\partial z_j\partial\bar z_k}(0)z_j\bar z_k+s\sum_{j,k=1}^{n-1}\frac{\partial^2\varphi}{\partial z_j\partial\bar z_k}(0)z_j\bar z_k.
\end{equation}
After a unitary change of coordinates, we may assume that
\begin{equation}
\label{eq:L_defn_2}
  L_s(z')=\sum_{j=1}^{n-1}\lambda^s_j|z_j|^2
\end{equation}
for some increasing sequence of real numbers $\{\lambda^s_j\}_{1\leq j\leq n-1}$.

Let $\psi_1\in C^\infty_0(\mathbb{C}^{n-1})$ and $\psi_3\in C^\infty_0(\mathbb{R})$ satisfy $\psi_3\equiv 1$ in a neighborhood of $0$ and $\int_{\mathbb{R}}|\psi_3|^2=1$.  For $x+iy\in\C$, if
we define
\[
  \psi_2(x+iy)=\psi_3(y)\left(\psi_3(x)+iy\psi_3'(x)-\frac{y^2}{2}\psi_3''(x)\right),
\]
then $\psi_2(z)$ is a smooth, compactly supported function on $\mathbb{C}$ satisfying
\begin{equation}
\label{eq:psi_2_derivative}
  \frac{\partial}{\partial\bar z}\psi_2(z)\Big|_{z=x}=0,
\end{equation}
\begin{equation}
\label{eq:psi_2_second_derivative}
  \frac{\partial^2}{\partial\bar z^2}\psi_2(z)\Big|_{z=x}=0,
\end{equation}
and
\begin{equation}
\label{eq:psi_2_integral}
  \int_{\mathbb{R}}\big|\psi_2(x)\big|^2dx=1.
\end{equation}

Let $A(z)$ and $B(z)$ be the holomorphic polynomials
\[
  A(z)=\sum_{j,k=1}^{n-1}\frac{\partial^2\rho_1}{\partial z_j\partial z_k}(0)z_j z_k+\sum_{j=1}^{n-1}2\frac{\partial^2\rho_1}{\partial z_j\partial x_n}(0)z_j z_n+\frac{1}{2}\frac{\partial^2\rho_1}{\partial x_n^2}(0)z_n^2
\]
and
\begin{multline*}
  B(z)=\frac{1}{2}\varphi(0)+\sum_{j=1}^n\frac{\partial\varphi}{\partial z_j}(0)z_j+\sum_{j,k=1}^{n-1}\frac{1}{2}\frac{\partial^2\varphi}{\partial z_j\partial z_k}(0)z_j z_k\\
  +\sum_{j=1}^{n-1}\frac{\partial^2\varphi}{\partial z_j\partial x_n}(0)z_j z_n+\frac{1}{4}\frac{\partial^2\varphi}{\partial x_n^2}(0)z_n^2.
\end{multline*}
Then we have
\begin{equation}
\label{eq:A_Taylor_Series}
  \abs{\rho_1(z)-\re A(z)-
  \sum_{j,k=1}^{n-1}\frac{\partial^2\rho_1}{\partial z_j\partial\bar z_k}(0)z_j\bar z_k}\leq O(|z|^3+|y_n||z|+|y_n|^2),
\end{equation}
and
\begin{equation}
\label{eq:B_Taylor_Series}
  \abs{\varphi(z)-2\re B(z)-\sum_{j,k=1}^{n-1}\frac{\partial^2\varphi}{\partial z_j\partial\bar z_k}(0)z_j\bar z_k}\leq O(|z|^3+|y_n||z|+|y_n|^2).
\end{equation}

For any $\tau>0$ we define a form in $C^\infty_{(0,q)}(\overline\Omega)\cap\dom\dbar^*$ by
\[
  f^\tau(z)=\psi_1(\tau z')\psi_2(\tau z_n)e^{\tau^2(A(z)+2s B(z)+i z_n)}\bigwedge_{j=1}^q\left(d\bar z_j-\left(\frac{\partial\tilde\delta}{\partial z_n}\right)^{-1}\frac{\partial\tilde\delta}{\partial z_j}d\bar z_n\right).
\]
As in H\"ormander's construction, we note that
\begin{equation}
\label{eq:q-form}
  \bigwedge_{j=1}^q\left(d\bar z_j-\left(\frac{\partial\tilde\delta}{\partial z_n}\right)^{-1}\frac{\partial\tilde\delta}{\partial z_j}d\bar z_n\right)\Bigg|_{z=0}=\bigwedge_{j=1}^q d\bar z_j,
\end{equation}
so the term involving $\frac{\partial\tilde\delta}{\partial z_j}$ will vanish in our asymptotic computations.  We introduce the change of coordinates $z_j(\tau)=\tau^{-1}w_j$ for $1\leq j\leq n-1$ and $z_n(\tau)=\tau^{-1}\re w_n+i\tau^{-2}\im w_n$.  Using \eqref{eq:A_Taylor_Series}, we have
\begin{multline}
\label{eq:defining_function_limit}
  \lim_{\tau\rightarrow\infty}\tau^2(\rho_1(z'(\tau),x_n(\tau))-y_n(\tau))=\\
  \re A(w',\re w_n)+\sum_{j,k=1}^{n-1}\frac{\partial^2\rho_1}{\partial z_j\partial\bar z_k}(0)w_j\bar w_k-\im w_n,
\end{multline}
so as $\tau\rightarrow\infty$ in our special coordinates, we will be working on the domain
\begin{equation}
\label{eq:Omega_w}
  \Omega_w=\set{w\in\mathbb{C}^n:\im w_n>\sum_{j,k=1}^{n-1}\frac{\partial^2\rho_1}{\partial z_j\partial\bar z_k}(0)w_j\bar w_k\\+\re A(w',\re w_n)}.
\end{equation}
Furthermore, we may use \eqref{eq:B_Taylor_Series} to check
\begin{equation}
\label{eq:B_limit}
  \lim_{\tau\rightarrow\infty}\tau^2(\varphi(z(\tau))-2\re B(z(\tau)))=\sum_{j,k=1}^{n-1}\frac{\partial^2\varphi}{\partial z_j\partial\bar z_k}(0)w_j\bar w_k.
\end{equation}
Motivated by this, we set $t(\tau)=2s\tau^2$.  For such a value of $t$, we may use \eqref{eq:q-form} and \eqref{eq:B_limit} to show
\begin{multline}
\label{eq:pointwise_limit}
  \lim_{\tau\rightarrow\infty}\abs{f^\tau(z(\tau))}^2 e^{-t(\tau)\varphi(z(\tau))}=\abs{\psi_1(w')}^2\abs{\psi_2(\re w_n)}^2\\
  \exp\left(2\re A(w',\re w_n)-2\im w_n-2s\sum_{j,k=1}^{n-1}\frac{\partial^2\varphi}{\partial z_j\partial\bar z_k}(0)w_j\bar w_k\right),
\end{multline}
so
\begin{multline*}
  \lim_{\tau\rightarrow\infty}\tau^{2n+1}\norm{f^\tau}^2_{L^2(\Om,t(\tau)\vp)}=\int_{\Omega_w}\abs{\psi_1(w')}^2\abs{\psi_2(\re w_n)}^2\\
   \exp\left(2\re A(w',\re w_n)-2\im w_n-2s\sum_{j,k=1}^{n-1}\frac{\partial^2\varphi}{\partial z_j\partial\bar z_k}(0)w_j\bar w_k\right)dV_w.
\end{multline*}
Since $\int_a^\infty e^{-2x}dx=\frac{1}{2}e^{-2a}$ for any $a\in\mathbb{R}$, we may use \eqref{eq:Omega_w} to evaluate this integral with respect to $\im w_n$ and then use \eqref{eq:psi_2_integral} to evaluate this integral with respect to $\re w_n$ to obtain
\begin{equation}
\label{eq:L2_limit}
  \lim_{\tau\rightarrow\infty}\tau^{2n+1}\norm{f^\tau}^2_{L^2(\Om,t(\tau)\vp)}=
  \int_{\mathbb{C}^{n-1}}\frac{1}{2}\abs{\psi_1(w')}^2
   e^{-2L_s(w')}dw'.
\end{equation}
By the same reasoning, we may use \eqref{eq:q-form} to obtain
\begin{multline}
\label{eq:weight_limit}
  \lim_{\tau\rightarrow\infty}\tau^{2n+1}\int_\Omega\sum_{j,k=1}^n
  \sum_{K\in\mathcal{I}_{q-1}}\frac{\partial^2\varphi}{\partial z_j\partial\bar z_k}f^\tau_{jK}\overline{f^\tau_{kK}}e^{-t(\tau)\varphi}dV=\\
  \int_{\mathbb{C}^{n-1}}\frac{1}{2}\abs{\psi_1(w')}^2\sum_{j=1}^q\frac{\partial^2\varphi}{\partial z_j\partial\bar z_j}(0)
   e^{-2 L_s(w')}dw'.
\end{multline}
If instead we integrate \eqref{eq:pointwise_limit} over the boundary, we may use \eqref{eq:Omega_w} and \eqref{eq:psi_2_integral} directly to obtain
\[
  \lim_{\tau\rightarrow\infty}\tau^{2n-1}\int_{\partial\Omega}\abs{f^\tau}^2 e^{-t(\tau)\varphi}d\sigma=\int_{\mathbb{C}^{n-1}}\abs{\psi_1(w')}^2
   e^{-2 L_s(w')}dw'.
\]
Similarly, using \eqref{eq:normalized_hessian} and \eqref{eq:q-form}, we also have
\begin{multline}
\label{eq:boundary_limit}
  \lim_{\tau\rightarrow\infty}\tau^{2n-1}\int_{\partial\Omega}\sum_{j,k=1}^n
  \sum_{K\in\mathcal{I}_{q-1}}\frac{\partial^2\tilde\delta}{\partial z_j\partial\bar z_k}f^\tau_{jK}\overline{f^\tau_{kK}}e^{-t(\tau)\varphi}d\sigma=\\
  \int_{\mathbb{C}^{n-1}}\abs{\psi_1(w')}^2\sum_{j=1}^q\frac{\partial^2\rho_1}{\partial z_j\partial\bar z_j}(0)
   e^{-2 L_s(w')}dw'.
\end{multline}
Using \eqref{eq:L_defn_1} and \eqref{eq:L_defn_2} to add \eqref{eq:weight_limit} and \eqref{eq:boundary_limit}, we obtain
\begin{multline}
\label{eq:combined_limit}
  \lim_{\tau\rightarrow\infty}\tau^{2n-1}\Bigg(\int_{\partial\Omega}\sum_{j,k=1}^n
  \sum_{K\in\mathcal{I}_{q-1}}\frac{\partial^2\tilde\delta}{\partial z_j\partial\bar z_k}f^\tau_{jK}\overline{f^\tau_{kK}}e^{-t(\tau)\varphi}d\sigma\\
  +t\int_\Omega\sum_{j,k=1}^n
  \sum_{K\in\mathcal{I}_{q-1}}\frac{\partial^2\varphi}{\partial z_j\partial\bar z_k}f^\tau_{jK}\overline{f^\tau_{kK}}e^{-t(\tau)\varphi}dV\Bigg)=\\
  \int_{\mathbb{C}^{n-1}}\abs{\psi_1(w')}^2\sum_{j=1}^q\lambda^s_j
   e^{-2 L_s(w')}dw'.
\end{multline}

For $1\leq k\leq n-1$, we compute
\begin{multline*}
  \frac{\partial f^\tau}{\partial\bar z_k}(z)=
  \tau\frac{\partial \psi_1}{\partial\bar z_k}(\tau z')\psi_2(\tau z_n)e^{\tau^2(A(z)+2s B(z)+i z_n)}\bigwedge_{j=1}^q\left(d\bar z_j-\left(\frac{\partial\tilde\delta}{\partial z_n}\right)^{-1}\frac{\partial\tilde\delta}{\partial z_j}d\bar z_n\right)\\
  +\psi_1(\tau z')\psi_2(\tau z_n)e^{\tau^2(A(z)+2s B(z)+i z_n)}\frac{\partial}{\partial\bar z_k}\bigwedge_{j=1}^q\left(d\bar z_j-\left(\frac{\partial\tilde\delta}{\partial z_n}\right)^{-1}\frac{\partial\tilde\delta}{\partial z_j}d\bar z_n\right).
\end{multline*}
Furthermore,
\begin{multline}
\label{eq:f_tau_derivative_n}
  \frac{\partial f^\tau}{\partial\bar z_n}(z)=
  \tau\psi_1(\tau z')\frac{\partial\psi_2}{\partial\bar z_n}(\tau z_n)e^{\tau^2(A(z)+2s B(z)+i z_n)}\bigwedge_{j=1}^q\left(d\bar z_j-\left(\frac{\partial\tilde\delta}{\partial z_n}\right)^{-1}\frac{\partial\tilde\delta}{\partial z_j}d\bar z_n\right)\\
  +\psi_1(\tau z')\psi_2(\tau z_n)e^{\tau^2(A(z)+2s B(z)+i z_n)}\frac{\partial}{\partial\bar z_n}\bigwedge_{j=1}^q\left(d\bar z_j-\left(\frac{\partial\tilde\delta}{\partial z_n}\right)^{-1}\frac{\partial\tilde\delta}{\partial z_j}d\bar z_n\right).
\end{multline}
Hence, using \eqref{eq:psi_2_derivative} and observing that the second term in each derivative is uniformly bounded in $\tau$, we have
\begin{multline}
\label{eq:pointwise_gradient_limit}
  \lim_{\tau\rightarrow\infty}\tau^{-2}\sum_{j=1}^n\abs{\frac{\partial}{\partial\bar z_j}f^\tau(z)\Big|_{z=z(\tau)}}^2 e^{-t(\tau)\varphi(z(\tau))}=\sum_{j=1}^{n-1}\abs{\frac{\partial}{\partial\bar w_j}\psi_1(w')}^2\abs{\psi_2(\re w_n)}^2\\
  \exp\left(2\re A(w',\re w_n)-2\im w_n-2s\sum_{j,k=1}^{n-1}\frac{\partial^2\varphi}{\partial z_j\partial\bar z_k}(0)w_j\bar w_k\right).
\end{multline}
Integrating \eqref{eq:pointwise_gradient_limit} as before, we obtain
\begin{equation}
\label{eq:gradient_limit}
  \lim_{\tau\rightarrow\infty}\tau^{2n-1}\sum_{j=1}^n\norm{\frac{\partial}{\partial\bar z_j}f^\tau}^2_{L^2(\Om,t(\tau)\vp)}=
  \int_{\mathbb{C}^{n-1}}\frac{1}{2}\sum_{j=1}^{n-1}\abs{\frac{\partial}{\partial\bar w_j}\psi_1(w')}^2
   e^{-2 L_s(w')}dw'.
\end{equation}

Recall the Morrey-Kohn-H\"ormander identity:
\begin{multline}
\label{eq:MKH_identity}
  \norm{\dbar f^\tau}^2_{L^2(\Om,t\vp)}+\norm{\dbar^*_{t\varphi} f^\tau}^2_{L^2(\Om,t\vp)}=\sum_{j=1}^n\norm{\frac{\partial}{\partial\bar z_j}f^\tau}^2_{L^2(\Om,t\vp)}\\
  +\int_{\partial\Omega}\sum_{j,k=1}^n\sum_{K\in\mathcal{I}_{q-1}}\frac{\partial^2\tilde\delta}{\partial z_j\partial\bar z_k}f^\tau_{jK}\overline{f^\tau_{kK}}e^{-t\varphi}d\sigma\\
  +t\int_\Omega\sum_{j,k=1}^n\sum_{K\in\mathcal{I}_{q-1}}\frac{\partial^2\varphi}{\partial z_j\partial\bar z_k}f^\tau_{jK}\overline{f^\tau_{kK}}e^{-t\varphi}dV
\end{multline}
Note that if $\rho$ is an arbitrary defining function for $\Omega$, then $\rho(z)=h(z)(\rho_1(z)-\im z_n)$ for a bounded function $h$ that is uniformly bounded away from zero, so \eqref{eq:defining_function_limit}, the fact that $t(\tau) = 2s\tau^2$, and the hypothesis that $\frac{C_t}{t^3}\rightarrow 0$ imply that
\[
  \lim_{\tau\rightarrow\infty}\frac{C_{t(\tau)}(-\rho(z(\tau)))^2}{t(\tau)}=\lim_{\tau\rightarrow\infty}\frac{C_{t(\tau)}(-2s\tau^2\rho(z(\tau)))^2}{(t(\tau))^3}=0.
\]
As a result,
\begin{equation}
\label{eq:compact_term_limit}
  \lim_{\tau\rightarrow\infty}\tau^{2n-1}C_{t(\tau)}\norm{(-\rho)f^\tau}^2_{L^2(\Om,t(\tau)\vp)}=\lim_{\tau\rightarrow\infty}\tau^{2n+1}\frac{2sC_{t(\tau)}\norm{(-\rho)f^\tau}^2_{L^2(\Om,t(\tau)\vp)}}{t(\tau)}=0.
\end{equation}
Hence, combining \eqref{eq:temp_strong_closed_range_3} with \eqref{eq:MKH_identity} gives us
\begin{multline*}
  t(\tau) C_q\norm{f^\tau}^2_{L^2(\Om,t(\tau)\vp)}\\\leq\sum_{j=1}^n\norm{\frac{\partial}{\partial\bar z_j}f^\tau}^2_{L^2(\Om,t(\tau)\vp)}
  +\int_{\partial\Omega}\sum_{j,k=1}^n\sum_{K\in\mathcal{I}_{q-1}}\frac{\partial^2\tilde\delta}{\partial z_j\partial\bar z_k}f^\tau_{jK}\overline{f^\tau_{kK}}e^{-t(\tau)\varphi}d\sigma\\
  +t(\tau)\int_\Omega\sum_{j,k=1}^n\sum_{K\in\mathcal{I}_{q-1}}\frac{\partial^2\varphi}{\partial z_j\partial\bar z_k}f^\tau_{jK}\overline{f^\tau_{kK}}e^{-t(\tau)\varphi}dV +C_{t(\tau)}\norm{(-\rho)f^\tau}^2_{L^2(\Om,t(\tau)\vp)}.
\end{multline*}
Multiplying this by $\tau^{2n-1}$ and taking a limit using \eqref{eq:L2_limit}, \eqref{eq:combined_limit}, \eqref{eq:gradient_limit}, and \eqref{eq:compact_term_limit}, we obtain
\begin{multline*}
  s C_q\int_{\mathbb{C}^{n-1}}\abs{\psi_1(w')}^2
   e^{-2 L_s(w')}dw'\\
   \leq\int_{\mathbb{C}^{n-1}}\left(\frac{1}{2}\sum_{j=1}^{n-1}\abs{\frac{\partial}{\partial\bar w_j}\psi_1(w')}^2
   +\abs{\psi_1(w')}^2\sum_{j=1}^q\lambda^s_j\right)
   e^{-2 L_s(w')}dw'.
\end{multline*}
Rearranging terms, we obtain
\begin{multline*}
  \left(s C_q-\sum_{j=1}^q\lambda^s_j\right)\int_{\mathbb{C}^{n-1}}\abs{\psi_1(w')}^2
   e^{-2 L_s(w')}dw'\\
   \leq\frac{1}{2}\int_{\mathbb{C}^{n-1}}\sum_{j=1}^{n-1}\abs{\frac{\partial}{\partial\bar w_j}\psi_1(w')}^2
   e^{-2 L_s(w')}dw'.
\end{multline*}
From Lemma 3.2.2 in \cite{Hor65}, we immediately obtain
\begin{equation}
\label{eq:eigenvalue_estimate}
  s C_q-\sum_{j=1}^q\lambda^s_j\leq\sum_{j=1}^{n-1}\max(-\lambda^s_j,0).
\end{equation}

Now, let $n^s_-$ denote the number of $j$ for which $\lambda^s_j<0$ and let $n^s_+$ denote the number of $j$ for which $\lambda^s_j>0$.  Since we have assumed that the $\{\lambda^s_j\}$ are arranged in increasing order, \eqref{eq:eigenvalue_estimate} implies
\begin{equation}
\label{eq:eigenvalues_estimate_refined}
  s C_q\leq\sum_{j=1}^q\lambda^s_j-\sum_{j=1}^{n^s_-}\lambda^s_j.
\end{equation}
If $n^s_-\leq q$ and $n^s_+\leq n-q-1$, then $s C_q\leq 0$, so we obtain an immediate contradiction.  Hence, either $n^s_-\geq q+1$ or $n^s_+\geq n-q$.

We now think of $s$ as a free parameter rather than a fixed constant.  Note that $n^s_-(z)$ and $n^s_+(z)$ are lower semicontinuous functions for $(s,z)\in\mathbb{R}^+\times \partial\Omega$.  Hence, if $n^s_-(z)>q$ at a point in $\mathbb{R}^+\times \partial\Omega$, then this condition also holds for a neighborhood of that point, with the analogous statement for $n^s_+>n-q-1$.  We conclude that for every connected component $S\subset \partial\Omega$, we must have $n^s_-(z)\geq q+1$ for all $z\in S$ and $s>0$ or $n^s_+\geq n-q$ for all $z\in S$ and $s>0$.

We first consider the case in which $n^s_+(z)\geq n-q$ for all $z\in S\subset \partial\Omega$ and $s>0$.  Then $n^s_-(z)<q$, so \eqref{eq:eigenvalues_estimate_refined} gives us
\[
  s C_q\leq\sum_{j=n^s_-(z)+1}^q\lambda^s_j(z)\leq(q-n^s_-(z))\lambda_q^s(z)\leq q\lambda_q^s(z),
\]
for all $z\in S$ and $s>0$, where the final inequality relies on the fact that the previous inequality guarantees $\lambda_q^s(z)>0$.  Letting $s\rightarrow 0$, we see that $\lambda_q^0(z)\geq 0$, i.e., the Levi form has at least $n-q$ nonnegative eigenvalues at $z$.  Since
\[
  \lim_{s\rightarrow\infty}\frac{\lambda_q^s(z)}{s}\geq\frac{C_q}{q},
\]
we see that the restriction of $i\ddbar\varphi$ to $T^{1,0}(\partial\Omega)\times T^{0,1}(\partial\Omega)$ must have at least $n-q$ eigenvalues bounded below by $\frac{C_q}{q}$.

We next suppose $n^s_-(z)\geq q+1$ for all $z\in S\subset \partial\Omega$ and $s>0$.  Then \eqref{eq:eigenvalues_estimate_refined} gives us
\[
  s C_q\leq-\sum_{j=q+1}^{n^s_-}\lambda^s_j\leq-(n^s_-(z)-q)\lambda_{q+1}^s(z)\leq -(n-1-q)\lambda_{q+1}^s(z),
\]
for all $z\in S$ and $s>0$, where the final inequality relies on the fact that the previous inequality guarantees $\lambda_{q+1}^s(z)<0$.  Letting $s\rightarrow 0$, we see that $\lambda_{q+1}^0(z)\leq 0$, i.e., the Levi form has at least $q+1$ nonpositive eigenvalues at $z$.  Since
\[
  \lim_{s\rightarrow\infty}\frac{\lambda_{q+1}^s(z)}{s}\leq-\frac{C_q}{n-1-q},
\]
we see that the restriction of $i\ddbar\varphi$ to $T^{1,0}(\partial\Omega)\times T^{0,1}(\partial\Omega)$ must have at least $q+1$ eigenvalues bounded above by $\frac{C_q}{n-q-1}$.  We have now proved the global version of Theorem \ref{thm:main_theorem}.

If we only have local estimates, it suffices to note that the support of $f^\tau$ is contained in a neighborhood of radius $O(\tau^{-1})$, and so $|z-p|^2\leq O(t^{-1})$ when $f^\tau(z)\neq 0$.  Hence, for $t$ sufficiently large, $f^\tau$ is supported in $U_t$, and the rest of the proof follows to obtain pointwise information at $p\in\partial\Omega$.
\end{proof}

\begin{proof}[Proof of Corollary \ref{cor:pseudoconvexity}]
  It suffices to note that since $\Omega$ is bounded, if we write $\Omega=\Omega_0\backslash\bigcup_{j\in J}\overline{\Omega_j}$, then $\partial\Omega_0$ and $\partial\Omega_j$ must each admit at least one strictly convex point for all $j\in J$.  Since $\Omega_0$ admits a strictly convex point, $\partial\Omega_0$ must satisfy $(1)$ in Theorem \ref{thm:main_theorem}.  For $j\in J$, $\partial\Omega_j$ admits at least one strictly convex point, so $\partial\Omega_j$ viewed as a component of $\partial\Omega$ admits at least one strictly concave point, and hence satisfies $(2)$ in Theorem \ref{thm:main_theorem}.
\end{proof}

%
%
\section{Applications}\label{sec:apps}

\subsection{Subelliptic and Compactness Estimates}

\begin{proof}[Proof of Proposition \ref{prop:subelliptic}]
Suppose that for some $\eta>0$, $\Omega$ admits a subelliptic estimate of the form
\[
  \norm{u}^2_{W^{\eta}(\Omega)}\leq C\left(\norm{\dbar u}^2_{L^2(\Omega)}+\norm{\dbar^* u}^2_{L^2(\Omega)}\right)
\]
for all $u\in L^2_{(0,q)}(\Omega)\cap\dom\dbar\cap\dom\dbar^*$.  By definition,
\[
  \norm{u}^2_{L^2(\Omega)}\leq\norm{u}_{W^{\eta}(\Omega)}\norm{u}_{W^{-\eta}(\Omega)},
\]
so for any $t>0$ we may use a small constant/large constant inequality to obtain
\[
  \norm{u}^2_{L^2(\Omega)}\leq \frac{1}{2t}\norm{u}_{W^{\eta}(\Omega)}^2+\frac{t}{2}\norm{u}_{W^{-\eta}(\Omega)}^2.
\]
If $\rho$ is a defining function for $\Omega$, then there exists a constant $C_{\Omega,\eta}>0$ such that
\[
  \norm{u}_{W^{-\eta}(\Omega)}\leq C_{\Omega,\eta}\norm{(-\rho)^\eta u}_{L^2(\Omega)},
\]
This follows from Theorem 1.4.4.3 in \cite{Gri11} by a duality argument, as in \eqref{eq:Grisvard}.  Hence,
\[
  \norm{u}^2_{L^2(\Omega)}\leq \frac{1}{2t}\norm{u}_{W^{\eta}(\Omega)}^2+\frac{t C_{\Omega,\eta}^2}{2}\norm{(-\rho)^\eta u}_{L^2(\Omega)}^2.
\]
Substituting our subelliptic estimate yields
\begin{equation}
\label{eq:subelliptic_SCRE}
  \norm{u}^2_{L^2(\Omega)}\leq \frac{C}{2t}\left(\norm{\dbar u}^2_{L^2(\Omega)}+\norm{\dbar^* u}^2_{L^2(\Omega)}\right)+\frac{t C_{\Omega,\eta}^2}{2}\norm{(-\rho)^\eta u}_{L^2(\Omega)}^2.
\end{equation}
We may assume that $\rho$ is a defining function for $\Omega$ that is smooth in the interior of $\Omega$, even if the boundary of $\Omega$ is only $C^2$.  Let $\psi\in C^\infty(\mathbb{R})$ denote a non-decreasing function satisfying $\psi(x)=0$ for all $x\leq 0$ and $\psi(x)=1$ for all $x\geq 1$.  Fix $b>a>0$ and set
  \[
    \chi_t(z)=\frac{t C_{\Omega,\eta}}{\sqrt{C}}\psi\left(\frac{-t^{1/(2\eta)}\rho(z)-a}{b-a}\right)(-\rho(z))^\eta.
  \]
  Since $\psi\left(\frac{-t^{1/(2\eta)}\rho(z)-a}{b-a}\right)\neq 1$ only when $-\rho(z)\leq\frac{b}{t^{1/(2\eta)}}$, we have
  \begin{align*}
    \frac{t^2 C_{\Omega,\eta}^2}{C}(-\rho(z))^{2\eta}&=(\chi_t(z))^2+\frac{t^2 C_{\Omega,\eta}^2}{C}\left(1-\left(\psi\left(\frac{-t^{1/(2\eta)}\rho(z)-a}{b-a}\right)\right)^2\right)(-\rho(z))^{2\eta}\\
    &\leq(\chi_t(z))^2+\frac{t C_{\Omega,\eta}^2b^{2\eta}}{C}.
  \end{align*}
  Substituting in \eqref{eq:subelliptic_SCRE} and rearranging terms, we obtain
  \[
    \frac{t}{C}\left(2-C_{\Omega,\eta}^2b^{2\eta}\right)\norm{u}^2_{L^2(\Omega)}\leq \norm{\dbar u}^2_{L^2(\Omega)}+\norm{\dbar^* u}^2_{L^2(\Omega)}+\norm{\chi_t u}_{L^2(\Omega)}^2.
  \]
  If we choose $b$ sufficiently small, then we may set $C_q=\frac{1}{C}\left(2-C_{\Omega,\eta}^2b^{2\eta}\right)$ and obtain $C_q>0$.  Thus we have an estimate of the form \eqref{eq:strong_closed_range} with $\varphi\equiv 0$, but without the growth condition on $\norm{\chi_t}^2_{C^1(\Omega)}$.  Since $\Omega$ is bounded, we have $\norm{\chi_t}_{L^\infty(\Omega)}\leq O(t)$.  We compute
  \begin{multline*}
    \nabla\chi_t(z)=-\frac{\eta t C_{\Omega,\eta}}{\sqrt{C}}\psi\left(\frac{-t^{1/(2\eta)}\rho(z)-a}{b-a}\right)(-\rho(z))^{\eta-1}\nabla\rho(z)\\
    -\frac{t^{1+1/(2\eta)} C_{\Omega,\eta}}{(b-a)\sqrt{C}}\psi'\left(\frac{-t^{1/(2\eta)}\rho(z)-a}{b-a}\right)(-\rho(z))^\eta\nabla\rho(z).
  \end{multline*}
  Since $\psi\left(\frac{-t^{1/(2\eta)}\rho(z)-a}{b-a}\right)\neq 0$ only when $-\rho(z)\geq\frac{a}{t^{1/(2\eta)}}$, the first term is bounded by $O(t\cdot t^{(1-\eta)/(2\eta)})=O(t^{(1+\eta)/(2\eta)})$.  Since $\psi'\left(\frac{-t^{1/(2\eta)}\rho(z)-a}{b-a}\right)\neq 0$ only when $\frac{b}{t^{1/(2\eta)}}\geq-\rho(z)$, the second term is bounded by $O(t^{1+1/(2\eta)}\cdot t^{-1/2})=O(t^{(1+\eta)/(2\eta)})$.  Consequently,
  \[
    \limsup_{t\rightarrow\infty}\frac{\norm{\chi_t}^2_{C^1(\Omega)}}{t^{(1+\eta)/\eta}}<\infty.
  \]
  Since $\varphi\equiv 0$, the conclusions of Theorem \ref{thm:main_theorem} does not follow, and hence
  \[
    \limsup_{t\rightarrow\infty}\frac{\norm{\chi_t}^2_{C^1(\Omega)}}{t^3}>0.
  \]
\end{proof}

\begin{proof}[Proof of Theorem \ref{thm:compactness}]
Suppose that \eqref{eq:quantitative_compactness} fails.  Then
  \[
    \lim_{\eps\rightarrow 0^+}{\eps}^2 C_{\eps}=0.
  \]
  For any $t>0$, let $\eps=\frac{1}{t}$.  Then we may set $C_t=t C_{\eps}$, $C_q=1$, and $\varphi\equiv 0$ to show that \eqref{eq:compactness} implies \eqref{eq:temp_strong_closed_range_2} (observe that $\frac{C_t}{t^3}=\eps^2 C_\eps$).
By Lemma \ref{lem:SCRE_relationships}, this also implies \eqref{eq:strong_closed_range}.  Hence, Theorem \ref{thm:main_theorem} implies that $i\ddbar\varphi$ has nontrivial eigenvalues, contradicting the fact that $\varphi$ is constant.
\end{proof}

\subsection{Sobolev Estimates}
For the remainder of this note, we concentrate on the implications of \eqref{eq:temp_strong_closed_range_2}.  Note that the following arguments do not require $C_t$ to depend on $t$ in
a prescribed way.

The following lemma appears in \cite{KhRa20}, though it is well-known.
\begin{lem}\label{lem:harm forms, L^2 ests}
Let $1 \leq q \leq n-1$. Suppose that $\Om \subset\C^n$ is a bounded domain. Then the following are equivalent:
\begin{enumerate}
\item The space of harmonic forms $\H_{0,q}(\Om,\phi)$ is finite dimensional and the $L^2$-basic estimate
\begin{equation}\label{eqn:BE for non harm}
\|u\|_{L^2(\Om,\phi)}^2 \leq C\big(\|\dbar u\|_{L^2(\Om,\phi)}^2 + \|\dbars_\phi u \|_{L^2(\Om,\phi)}^2\big)
\end{equation}
holds for all $u \in \Dom(\dbar)\cap\Dom(\dbars_\phi) \cap (\H_{0,q}(\Om,\phi))^\perp$.
\item The $L^2$-basic estimate
\begin{equation}\label{eqn:BE for general}
\|u\|_{L^2(\Om,\phi)}^2 \leq C\big(\|\dbar u\|_{L^2(\Om,\phi)}^2 + \|\dbars_\phi u \|_{L^2(\Om,\phi)}^2\big) + C_\phi\|u\|_{L^{2,-1}(\Om,\phi)}^2
\end{equation}
holds for all $u \in \Dom(\dbar)\cap\Dom(\dbars_\phi)$.
\end{enumerate}
\end{lem}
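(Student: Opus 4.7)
The plan is a standard functional-analytic argument leveraging the compactness of the Rellich embedding $L^2(\Om,\phi)\hookrightarrow L^{2,-1}(\Om,\phi)$ on the bounded domain $\Om$, together with the fact that two norms are equivalent on any finite-dimensional subspace.

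For $(1)\Rightarrow(2)$, I would decompose any $u\in\Dom(\dbar)\cap\Dom(\dbars_\phi)$ as $u=Pu+Hu$, where $H$ is the $L^2(\Om,\phi)$-orthogonal projection onto $\H_{0,q}(\Om,\phi)$ and $P=I-H$. Because harmonic forms lie in the kernel of both $\dbar$ and $\dbars_\phi$, the component $Pu$ lies in $\Dom(\dbar)\cap\Dom(\dbars_\phi)\cap(\H_{0,q}(\Om,\phi))^\perp$ with $\dbar Pu=\dbar u$ and $\dbars_\phi Pu=\dbars_\phi u$, so \eqref{eqn:BE for non harm} bounds $\|Pu\|_{L^2(\Om,\phi)}^2$ by the first term on the right-hand side of \eqref{eqn:BE for general}. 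For $Hu$, the finite-dimensionality of $\H_{0,q}(\Om,\phi)$ forces the norms $\|\cdot\|_{L^2(\Om,\phi)}$ and $\|\cdot\|_{L^{2,-1}(\Om,\phi)}$ to be equivalent on this subspace, yielding $\|Hu\|_{L^2(\Om,\phi)}\lesssim\|Hu\|_{L^{2,-1}(\Om,\phi)}\le\|u\|_{L^{2,-1}(\Om,\phi)}+\|Pu\|_{L^{2,-1}(\Om,\phi)}\lesssim\|u\|_{L^{2,-1}(\Om,\phi)}+\|Pu\|_{L^2(\Om,\phi)}$. Adding the two estimates produces \eqref{eqn:BE for general}.

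For $(2)\Rightarrow(1)$, I would first show $\dim\H_{0,q}(\Om,\phi)<\infty$: on $\H_{0,q}(\Om,\phi)$ the estimate \eqref{eqn:BE for general} collapses to $\|u\|_{L^2(\Om,\phi)}^2\le C_\phi\|u\|_{L^{2,-1}(\Om,\phi)}^2$, so the $L^2$-unit ball of $\H_{0,q}(\Om,\phi)$, which is bounded and hence precompact in $L^{2,-1}(\Om,\phi)$ via Rellich, is also precompact in $L^2(\Om,\phi)$; Riesz's lemma then gives the finite dimension. To obtain \eqref{eqn:BE for non harm}, I would argue by contradiction: suppose $u_n\in\Dom(\dbar)\cap\Dom(\dbars_\phi)\cap(\H_{0,q}(\Om,\phi))^\perp$ with $\|u_n\|_{L^2(\Om,\phi)}=1$ and $\|\dbar u_n\|_{L^2(\Om,\phi)}^2+\|\dbars_\phi u_n\|_{L^2(\Om,\phi)}^2\to 0$. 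From \eqref{eqn:BE for general}, $\|u_n\|_{L^{2,-1}(\Om,\phi)}$ stays bounded below by a positive constant. After extracting a subsequence, Banach--Alaoglu supplies a weak $L^2$-limit $u$ and Rellich upgrades the convergence to strong in $L^{2,-1}(\Om,\phi)$, so $u\ne 0$, and $u\in(\H_{0,q}(\Om,\phi))^\perp$ by weak closure of the orthogonal complement.

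The step I expect to be the main obstacle, though routine, is confirming that the weak limit $u$ actually lies in $\H_{0,q}(\Om,\phi)$, yielding the desired contradiction $u=0$. Since $\dbar u_n\to 0$ strongly in $L^2$ and $u_n\rightharpoonup u$ weakly in $L^2$, pairing against compactly supported test forms gives $\dbar u=0$ distributionally, so $u\in\Dom(\dbar)$ with $\dbar u=0$. For the adjoint, the Hilbert-space adjoint characterization is essential: for every $v\in\Dom(\dbar)$,
\[
(u,\dbar v)_{L^2(\Om,\phi)}=\lim_{n\to\infty}(u_n,\dbar v)_{L^2(\Om,\phi)}=\lim_{n\to\infty}(\dbars_\phi u_n,v)_{L^2(\Om,\phi)}=0,
\]
which places $u\in\Dom((\dbar)^*)=\Dom(\dbars_\phi)$ with $\dbars_\phi u=0$. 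Combined with $u\in(\H_{0,q}(\Om,\phi))^\perp$, this forces $u=0$, contradicting $\|u\|_{L^{2,-1}(\Om,\phi)}>0$ and completing the argument.
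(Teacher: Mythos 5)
Your proof is correct and is the standard functional-analytic argument for this kind of equivalence; the paper itself does not include a proof of this lemma, deferring instead to \cite{KhRa20}, so there is nothing to compare against. Both directions are handled cleanly: the decomposition $u=Pu+Hu$ together with equivalence of the $L^2$ and $W^{-1}$ norms on the finite-dimensional harmonic space gives $(1)\Rightarrow(2)$, and the Riesz lemma plus the compactness/weak-limit contradiction argument (with the key observation that $\dbar$ and $\dbars_\phi$ are weakly closed via their distributional/adjoint characterizations) gives $(2)\Rightarrow(1)$.
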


We now use an elliptic regularization argument to analyze the regularity of the $\dbar$-Neumann operator and harmonic forms. Let $\rho$ be a defining function for $\Omega$ normalized so that $|d\rho|=1$.  In real coordinates for $\mathbb{R}^{2n}$, we define the tangential gradient
\begin{multline*}
  (\nabla_T u,\nabla_T v)_\phi=\\
  \sum_{j=1}^{2n}\left(\left(\frac{\partial}{\partial x_j}-\frac{\partial\rho}{\partial x_j}\sum_{k=1}^{2n}\frac{\partial\rho}{\partial x_k}\frac{\partial}{\partial x_k}\right)u,\left(\frac{\partial}{\partial x_j}-\frac{\partial\rho}{\partial x_j}\sum_{\ell=1}^{2n}\frac{\partial\rho}{\partial x_\ell}\frac{\partial}{\partial x_\ell}\right)v\right)_\phi,
\end{multline*}
for $u,v\in L^{2,1}(\Omega,\phi)$ with the corresponding tangential Laplacian
\[
  \Delta_{T,\phi} u=\sum_{j=1}^{2n}\left(\frac{\partial}{\partial x_j}-\frac{\partial\rho}{\partial x_j}\sum_{\ell=1}^{2n}\frac{\partial\rho}{\partial x_\ell}\frac{\partial}{\partial x_\ell}\right)^{*,\phi}\left(\frac{\partial}{\partial x_j}-\frac{\partial\rho}{\partial x_j}\sum_{k=1}^{2n}\frac{\partial\rho}{\partial x_k}\frac{\partial}{\partial x_k}\right)u
\]
for $u\in L^{2,2}(\Omega,\phi)$.  Define the quadratic forms
\[
Q_\phi^{\delta,\nu} (u,v) = (\dbar u,\dbar v)_\phi + (\dbars_\phi u,\dbars_\phi v)_\phi + \delta(\nabla_T u, \nabla_T v)_\phi  + \nu(u,v)_\phi
\]
for $u,v \in\Dom(\dbars_\phi) \cap L^{2,1}_{0,q}(\Om,\phi)$ when $\delta>0$, or $u,v \in\Dom(\dbar)\cap\Dom(\dbars_\phi)$ when $\delta=0$.  When $\delta>0$, we may use, for example, Lemma 2.2 in \cite{Str10} to show that there exists a constant $c_{\delta,\nu}>0$ such that
\begin{equation}\label{eqn:u W^1 bdd by Q delta nu}
\|u\|_{L^{2,1}(\Om,\phi)}^2\leq c_{\delta,\nu} Q_\phi^{\delta,\nu}(u,u)\quad \text{holds for all
$u\in L^{2,1}_{0,q}(\Om,\phi)\cap\Dom(\dbars_\phi)$.}
\end{equation}
When $\delta$ or $\nu$ is equal to $0$, we omit the corresponding superscript, so, for example, $Q^{0,0}(u,v) = Q(u,v)$.  We may use standard techniques to construct the corresponding Laplacians
\[
\Box_\phi^{\delta,\nu} =\Box_\phi+\delta\Delta_{T,\phi} + \nu I
\]
with appropriate domains $\Dom(\Box_\phi^{\delta,\nu})$ (see, for example, Section 2.8 in \cite{Str10} when $\delta=0$ or Section 3.3 in \cite{Str10} when $\delta>0$).
Since \eqref{eqn:u W^1 bdd by Q delta nu} implies that $\Box_\phi^{\delta,\nu}$ is elliptic when $\delta>0$, we have $u\in L^{2,2}_{0,q}(\Omega,\phi)$ whenever $\Box_\phi^{\delta,\nu} u\in L^2(\Omega,\phi)$, and hence $\Dom(\Box_\phi^{\delta,\nu})\subset L^{2,2}_{0,q}(\Omega,\phi)$.  Since the term with the coefficient of $\delta$ involves only tangential derivatives, we have
\[
  \Dom(\Box_\phi^{\delta,\nu})=L^{2,2}_{0,q}(\Omega,\phi)\cap\Dom(\Box_\phi).
\]

\begin{thm}\label{thm:regularization}
Let $\Om\subset\C^n$ be a bounded domain and $1\leq q\leq n$.
Assume that
\begin{enumerate}[1.]
\item There is a constant $c>0$ so that
for any $u\in \Dom(\dbar)\cap \Dom(\dbars_\phi)\cap L^2_{0,q}(\Om,\phi)$, the following $L^2$-basic estimate holds:
\begin{equation}\label{eqn:L2 appendix}
\|u\|_{L^2(\Om,\phi)}^2\leq c\left(\|\dbar u\|_{L^2(\Om,\phi)}^2+\|\dbars_\phi u\|_{L^2(\Om,\phi)}^2+\|u\|_{L^{2,-1}(\Om,\phi)}^2\right)
\end{equation}

\item For some fixed $s_0 \in \N$ and all $0 \leq s \leq s_0$, there exists a constant $c_s>0$ so that
\begin{equation}\label{eqn:a priori for s}
\|u\|_{L^{2,s}(\Om,\phi)}^2\leq c_s\left(\|\Box_\phi^\delta u\|_{L^{2,s}(\Om,\phi)}^2+\|u\|_{L^2(\Om,\phi)}^2\right).
\end{equation}
for any $u\in L^{2,s}_{0,q}(\Om, \phi)$ so that $u\in \Dom(\Box_\phi)$ and  $\Box_\phi^\delta u \in L^{2,s}_{0,q}(\Om, \phi)$.
\end{enumerate}
Then $\H_{0,q}(\Om,\phi)\subset L^{2,s_0}_{0,q}(\Om,\phi)$ and the $\dbar$-Neumann operator $N^q_\phi$ is exactly regular on $L^{2,s}_{0,q}(\Om,\phi)$
for $0 \leq s \leq s_0$.

\end{thm}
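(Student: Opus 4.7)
The strategy is Kohn's elliptic regularization argument: for $\delta,\nu>0$, invert the elliptic approximation $\Box_\phi^{\delta,\nu}$, use hypothesis $(1)$ together with Lemma \ref{lem:harm forms, L^2 ests} to obtain uniform-in-$(\delta,\nu)$ $L^2$ bounds on the solutions, and use hypothesis $(2)$ to bootstrap to $L^{2,s}$ regularity before letting $(\delta,\nu)\to(0,0)$. By \eqref{eqn:u W^1 bdd by Q delta nu}, $Q_\phi^{\delta,\nu}$ is coercive, so Lax--Milgram yields a bounded inverse $N_\phi^{q,\delta,\nu}$ of $\Box_\phi^{\delta,\nu}$ on $L^2_{0,q}(\Om,\phi)$. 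Since the $\dbar$-Neumann boundary conditions coupled with the tangential term $\delta\Delta_{T,\phi}+\nu I$ form a coercive elliptic boundary value problem for $\delta>0$, standard boundary elliptic regularity (Agmon--Douglis--Nirenberg) gives $N_\phi^{q,\delta,\nu}:L^{2,s}\to L^{2,s+2}$ for every $s\geq 0$, with operator norm depending on $(\delta,\nu)$.

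\textbf{Uniform bounds and passage to the limit.} Lemma \ref{lem:harm forms, L^2 ests} and hypothesis $(1)$ imply that $\H_{0,q}(\Om,\phi)$ is finite-dimensional and that the basic estimate $\|u\|_{L^2(\Om,\phi)}^2\leq C\,Q_\phi(u,u)$ holds on $\Dom(\dbar)\cap\Dom(\dbars_\phi)\cap\H_{0,q}(\Om,\phi)^\perp$. For $f\in L^{2,s}_{0,q}(\Om,\phi)\cap\H_{0,q}(\Om,\phi)^\perp$, set $u^{\delta,\nu}=N_\phi^{q,\delta,\nu}f$. Combining the energy identity $Q_\phi^{\delta,\nu}(u^{\delta,\nu},u^{\delta,\nu})=(f,u^{\delta,\nu})_\phi$ with the basic estimate applied to the $\H_{0,q}^\perp$-projection of $u^{\delta,\nu}$ (handling the finite-dimensional harmonic component separately) yields a bound $\|u^{\delta,\nu}\|_{L^2(\Om,\phi)}\leq C\|f\|_{L^2(\Om,\phi)}$ uniform in $(\delta,\nu)$. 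Writing $\Box_\phi^\delta u^{\delta,\nu}=f-\nu u^{\delta,\nu}$, hypothesis $(2)$ gives
\[
\|u^{\delta,\nu}\|_{L^{2,s}(\Om,\phi)}^2\leq c_s\bigl(2\|f\|_{L^{2,s}(\Om,\phi)}^2+2\nu^2\|u^{\delta,\nu}\|_{L^{2,s}(\Om,\phi)}^2+\|u^{\delta,\nu}\|_{L^2(\Om,\phi)}^2\bigr).
\]
Choosing $\nu$ small enough (depending only on $c_s$) to absorb the middle term and using the $L^2$ bound yields $\|u^{\delta,\nu}\|_{L^{2,s}(\Om,\phi)}\leq c_s'\|f\|_{L^{2,s}(\Om,\phi)}$ uniformly in $\delta$. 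Fixing such a $\nu$ and letting $\delta\to 0$, weak compactness produces a weak limit $u^\nu\in L^{2,s}$ satisfying $(\Box_\phi+\nu I)u^\nu=f$; a second passage $\nu\to 0$ identifies the limit with $N^q_\phi f$ and preserves the estimate.

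\textbf{Harmonic forms and principal obstacle.} To show $\H_{0,q}(\Om,\phi)\subset L^{2,s_0}_{0,q}(\Om,\phi)$, I would approximate each $h\in\H_{0,q}$ via spectral projections $h^{\delta,\nu}$ of $\Box_\phi^{\delta,\nu}$ onto eigenvalues $\lambda^{\delta,\nu}\to 0$ as $(\delta,\nu)\to 0$. Each $h^{\delta,\nu}$ is smooth by ellipticity, and since $\Box_\phi^\delta h^{\delta,\nu}=(\lambda^{\delta,\nu}-\nu)h^{\delta,\nu}$ is of the same order as $h^{\delta,\nu}$ in $L^2$, iterating hypothesis $(2)$ from $s=0$ up to $s=s_0$ produces uniform $L^{2,s_0}$ bounds on $h^{\delta,\nu}$; the limit lies in $L^{2,s_0}$ and spans $\H_{0,q}$. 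Once harmonic forms are in $L^{2,s_0}$, the harmonic projection $H^q_\phi$ is finite-rank and bounded on every $L^{2,s}$ with $s\leq s_0$, so the $L^{2,s}$-regularity of $N^q_\phi$ on $\H_{0,q}^\perp$ extends to all of $L^{2,s}_{0,q}(\Om,\phi)$ via $N^q_\phi f=N^q_\phi(f-H^q_\phi f)$. The principal obstacle is making the $L^2$ bound truly uniform in $(\delta,\nu)$: the harmonic spaces of $\Box_\phi$ and $\Box_\phi^{\delta,\nu}$ do not coincide, so one must carefully control how the $\H_{0,q}$-component of $u^{\delta,\nu}$ behaves, using finite-dimensionality together with a compactness argument exploiting the $L^{2,-1}$-term in hypothesis $(1)$.
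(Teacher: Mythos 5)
Your overall framework coincides with the paper's (build $\Box_\phi^{\delta,\nu}$, invert, prove uniform estimates via hypotheses $(1)$ and $(2)$, pass to limits), but there is a genuine gap in the uniform $L^2$ bound, and you flag it yourself without resolving it. For $\delta>0$ the claimed bound $\|u^{\delta,\nu}\|_{L^2(\Om,\phi)}\leq C\|f\|_{L^2(\Om,\phi)}$ uniform in $(\delta,\nu)$ is not available even for $f\perp\H_{0,q}(\Om,\phi)$. Testing $Q^{\delta,\nu}_\phi(u^{\delta,\nu},\psi)=(f,\psi)_\phi$ against a harmonic $\psi$ gives $\nu(u^{\delta,\nu},\psi)_\phi=-\delta(\nabla_T u^{\delta,\nu},\nabla_T\psi)_\phi$, whose right side does not vanish; running the energy estimate with $u^{\delta,\nu}=u^h+u^\perp$ ($u^h=H^q_\phi u^{\delta,\nu}$) only yields $\nu\|u^h\|^2_{L^2(\Om,\phi)}\leq\|f\|_{L^2(\Om,\phi)}\|u^\perp\|_{L^2(\Om,\phi)}$, i.e.\ $\|u^h\|_{L^2(\Om,\phi)}\lesssim\nu^{-1/2}\|f\|_{L^2(\Om,\phi)}$, which degenerates as $\nu\to 0$. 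The paper avoids this by using only the crude bound $\|N^{\delta,\nu;q}_\phi\alpha\|_{L^2(\Om,\phi)}\leq\nu^{-1}\|\alpha\|_{L^2(\Om,\phi)}$ (uniform in $\delta$ but not in $\nu$), sending $\delta\to 0$ first with $\nu$ fixed, and then observing that $Q^{0,\nu}_\phi(\cdot,\psi)=\nu(\cdot,\psi)_\phi$ on harmonic $\psi$, so $N^{0,\nu;q}_\phi f\perp\H_{0,q}(\Om,\phi)$ whenever $f\perp\H_{0,q}(\Om,\phi)$; only at that point does hypothesis $(1)$ give an $L^2$ bound uniform in $\nu$. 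Your outline asks for $\nu$-uniformity before $\delta$ is removed, which is exactly where the argument fails.

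The harmonic-form step is also not carried through. Spectral projections of $\Box^{\delta,\nu}_\phi$ onto small eigenvalues would need to converge in rank and in norm to $H^q_\phi$, but $\Box^{\delta,\nu}_\phi=\Box_\phi+\delta\Delta_{T,\phi}+\nu I$ is a singular perturbation of $\Box_\phi$ (the tangential Laplacian is unbounded), and $\Box^{\delta,\nu}_\phi$ is strictly positive, so there is no spectral gap at $0$ and no general spectral continuity to invoke; you would need a substantial independent argument. The paper instead proves $\H_{0,q}(\Om,\phi)\subset L^{2,s_0}_{0,q}(\Om,\phi)$ inductively by a blow-up normalization: pick $\alpha\in L^{2,s_0}_{0,q}(\Om,\phi)$ orthogonal to the harmonic forms already shown regular but not to a remaining one, argue $\|N^{0,\nu;q}_\phi\alpha\|_{L^2(\Om,\phi)}\to\infty$ as $\nu\to 0$ (otherwise one solves $Q_\phi(u,\cdot)=(\alpha,\cdot)_\phi$, contradicting $(\alpha,\theta_{k+1})_\phi\neq 0$), and extract a new regular harmonic form as a weak $L^{2,s_0}$ limit of the normalized resolvents $w_m=N^{0,\nu_m;q}_\phi\alpha/\|N^{0,\nu_m;q}_\phi\alpha\|_{L^2(\Om,\phi)}$, whose $L^{2,s_0}$ norms are controlled by \eqref{eqn:a priori for s}. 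You should replace the spectral-projection sketch with an argument of comparable strength; as written it does not establish the claim.
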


\begin{proof} Suppose $s \leq s_0$ is a positive integer. Then
from \eqref{eqn:a priori for s}, there is an $\nu_{s_0}$ such that for any $\nu<\nu_{s_0}$ the following estimate
\begin{equation}\label{eqn:Box delta nu}
\|u\|_{L^{2,s}(\Om,\phi)}^2\leq 2c_s\left(\|\Box_\phi^{\delta,\nu} u\|_{L^{2,s}(\Om,\phi)}^2+\|u\|_{L^2(\Om,\phi)}^2\right)
\end{equation}
holds for any $u\in L^{2,s}_{0,q}(\Om,\phi)\cap \Dom(\Box_\phi)$ satisfying $\Box_\phi^{\delta,\nu} u\in L^{2,s}_{0,q}(\Om,\phi)$.
By construction, for any $\nu>0$ and $\delta\geq 0$ we also have
\begin{equation}\label{eqn:L2 Q delta nu}
\|u\|_{L^2(\Om,\phi)}^2\leq \frac{1}{\nu}Q_\phi^{\delta,\nu}(u,u)
\end{equation}
for all $u\in L^{2,1}_{0,q}(\Om,\phi) \cap \Dom(\dbars_\phi)$.

Consequently,
$\Box_\phi^{\delta,\nu}$ has closed range and a trivial kernel. This means $\Box_\phi^{\delta,\nu}$ has a continuous inverse on
$L^2_{0,q}(\Om,\phi)$ that we denote by $N_\phi^{\delta,\nu;q}$.
Also, for each $\nu> 0$, the inverse $N_\phi^{\delta,\nu;q}$ satisfies
\begin{equation}\label{eqn:L2 G_q delta nu}
\|N_\phi^{\delta,\nu;q}\alpha\|_{L^2(\Om,\phi)}^{2}\leq \frac{1}{\nu}\|\alpha\|_{L^2(\Om,\phi)}^{2} \quad \text{for all $\alpha\in L^2_{0,q}(\Om,\phi)$}.
\end{equation}

\textbf{Step 1:}  We will first show that if $\alpha \in  L^{2,s}_{0,q}(\Om,\phi)$ then $N_\phi^{0,\nu;q} \alpha \in  L^{2,s}_{0,q}(\Om,\phi)\cap \Dom(\Box_\phi)$.
By \eqref{eqn:u W^1 bdd by Q delta nu}, it follows that $\Box_\phi^{\delta,\nu}$ is elliptic which means that
if $\alpha\in L^{2,s}_{0,q}(\Om,\phi)$,
then $N_\phi^{\delta,\nu;q} \alpha\in L^{2,s+2}_{0,q}(\Om,\phi)\cap\Dom(\Box_\phi^{\delta,\nu})$.
Moreover, $L^{2,s+2}_{0,q}(\Om,\phi)\cap\Dom(\Box_\phi^{\delta,\nu}) \subset\Dom(\Box_\phi)$.
We can therefore use (\ref{eqn:Box delta nu}) with $u=N_\phi^{\delta,\nu;q}\alpha$ and estimate
\begin{multline}
\|N_\phi^{\delta,\nu;q}\alpha\|^2_{L^{2,s}(\Om,\phi)}
\leq 2 c_s\left( \|\Box_\phi^{\delta,\nu}N_\phi^{\delta,\nu;q}\alpha\|^2_{L^{2,s}(\Om,\phi)}
+\|N_\phi^{\delta,\nu;q} \alpha\|_{L^2(\Om,\phi)}^2\right) \\
= 2c_s\left( \|\alpha\|^2_{L^{2,s}(\Om,\phi)}+\|N_\phi^{\delta,\nu;q} \alpha\|_{L^2(\Om,\phi)}^2\right)
\leq 2c_s \|\alpha\|^2_{L^{2,s}(\Om,\phi)}+c_{s,\nu}\|\alpha\|_{L^2(\Om,\phi)}^2 \label{eqn:N delta nu s in terms of s and 0}
\end{multline}
for any positive integer $s \leq s_0$. The equality in \eqref{eqn:N delta nu s in terms of s and 0}
follows from the identity $\Box_\phi^{\delta,\nu}N_\phi^{\delta,\nu;q}=I$ and the fact that $\ker(\Box_\phi^{\delta,\nu} )=\{0\}$. The (second)
inequality follows by \eqref{eqn:L2 G_q delta nu} and the independence of the constants $c_s, c_{s,\nu}$ on $\delta>0$.

Thus, $\|N_\phi^{\delta,\nu;q} \alpha\|_{L^{2,s_0}(\Om,\phi)}$ is uniformly bounded in $\delta>0$.  Therefore, there exists a sequence $\delta_k\searrow 0$ such that $N^{\delta_{k},\nu;q}_\phi \alpha\to  u_{\nu}$ weakly in $L^{2,s_0}_{0,q}(\Om,\phi)$.  For any integer $0\leq s\leq s_0$, if $f\in L^{2,s}_{0,q}(\Om,\phi)$, then by the Riesz Representation Theorem there exists $\tilde f\in L^{2,s_0}_{0,q}(\Om,\phi)$ such that $(f,g)_{L^{2,s}(\Om,\phi)}=(\tilde f,g)_{L^{2,s_0}(\Om,\phi)}$ for all $g\in L^{2,s_0}(\Om,\phi)$, and hence $N^{\delta_{k},\nu;q}_\phi \alpha\to  u_{\nu}$ weakly in $L^{2,s}_{0,q}(\Om,\phi)$ for all integers $0\leq s\leq s_0$.  Thus, it follows that $u_\nu \in L^{2,s}_{0,q}(\Om,\phi)$, $0 \leq s \leq s_0$. Additionally,
$N^{\delta_k,\nu;q}_\phi \alpha\to  u_\nu$ weakly in the  $Q_\phi^{0,\nu}(\cdot,\cdot)^{1/2}$-norm. This
means that if $v\in L^{2,2}_{0,q}(\Om)\cap\Dom(\dbars_\phi)$, then
\[
\lim_{k\to\infty}Q_\phi^{0,\nu}(N^{\delta_k,\nu;q}_\phi\alpha,v)  =Q_\phi^{0,\nu}(u_\nu,v).
\]
On the other hand,
\begin{multline*}
Q_\phi^{0,\nu}(N^{0,\nu;q}_\phi\alpha,v)  =(\alpha,v)=
Q_\phi^{\delta,\nu}(N^{\delta,\nu;q}_\phi \alpha,v)\\
=Q_\phi^{0,\nu}(N^{\delta,\nu;q}_\phi \alpha,v) +\delta(\nabla_T N^{\delta,\nu;q}_\phi \alpha,\nabla_T v)
\end{multline*}
for all $v\in L^{2,2}_{0,q}(\Om,\phi)\cap\Dom(\dbars_\phi)$. It follows that
\begin{multline*}
\big|Q_\phi^{0,\nu}\big((N^{\delta,\nu;q}_\phi -N^{0,\nu;q}_\phi)\alpha,v\big)\big|
= \delta\big|(\nabla_T N^{\delta,\nu;q}_\phi \alpha,\nabla_T v)\big| \\
\leq \delta \|N^{\delta,\nu;q}_\phi\alpha\|_{L^2(\Om,\phi)}
\|v\|_{L^{2,2}(\Om,\phi)}\leq \delta c_\nu\|\alpha\|_{L^2(\Om,\phi)} \|v\|_{L^{2,2}(\Om,\phi)}
\end{multline*}
where we again used the inequality  $\|N^{\delta,\nu;q}_\phi \alpha\|_{L^2(\Om,\phi)}\leq c_\nu\|\alpha\|_{L^2(\Om,\phi)}$ uniformly in $\delta\geq 0$.
Thus,
\[
\lim_{\delta\to 0} Q_\phi^{0,\nu}\big((N^{\delta,\nu;q}_\phi -N^{0,\nu;q}_\phi)\alpha,v\big) =0.
\]
Since $\ker \Box_\phi^{\delta,\nu}=\{0\}$, it follows that $N^{0,\nu;q}_\phi\alpha =  u_\nu$ and hence $N^{0,\nu;q}_\phi\alpha \in L^{2,s}_{0,q}(\Om,\phi)$ for all integers $0\leq s\leq s_0$.
Moreover, we may apply \eqref{eqn:Box delta nu} with $u=N^{0,\nu;q}_\phi\alpha \in L^{2,s}_{0,q}(\Om,\phi)\cap \Dom(\Box_\phi)$ and $\delta=0$ and observe
\begin{equation}\label{est:G0nu}
\|N^{0,\nu;q}_\phi\alpha\|^2_{L^{2,s}(\Om,\phi)}\leq 2c_s\left( \|\alpha\|^2_{L^{2,s}(\Om,\phi)}+\|N^{0,\nu;q}_\phi\alpha\|_{L^2(\Om,\phi)}^2\right),
\end{equation}
holds for all $\alpha\in L^{2,s}_{0,q}(\Om,\phi)$.

\textbf{Step 2:} We next show that $\H_{0,q}(\Om,\phi) \subset L^{2,s}_{0,q}(\Om,\phi)$ for $0 \leq s \leq s_0$.

By Lemma~\ref{lem:harm forms, L^2 ests}, the space of $L^2$-harmonic forms $\H_{0,q}(\Om,\phi)$ is finite dimensional.
Let $\theta_1,\cdots,\theta_N\in \H_{0,q}(\Om,\phi)$ be an orthonormal basis, and set $\theta_0=0$.
We will prove $\theta_j\in L^{2,s_0}_{0,q}(\Om,\phi)(\Om)$ for all $j$ by induction.
Certainly $\theta_0 \in L^{2,s_0}_{0,q}(\Om,\phi)(\Om)$.
Assume now that for some $0\leq k<N$,
$\theta_j\in L^{2,s_0}_{0,q}(\Om,\phi)(\Om)$ for all $0 \leq j \leq k$.
We will construct $\theta\in \H_{0,q}(\Om,\phi)\cap L^{2,s_0}_{0,q}(\Om,\phi)(\Om)$ with $\|\theta\|_{L^2(\Om,\phi)}=1$
and $(\theta,\theta_j)_{L^2(\Om,\phi)}=0$ for $j\leq k$. If we replace $\theta_{k+1}$ with $\theta$, we may proceed by induction to obtain a basis of
$\H_{0,q}(\Om,\phi)$ which is contained in $L^{2,s_0}_{0,q}(\Om,\phi)$.

Let $\alpha\in L^{2,s_0}_{0,q}(\Om,\phi)$ be a form
such that $\alpha$ is orthogonal (in $L^2_{0,q}(\Om,\phi)$) to $\theta_j$ for $j\leq k$ but not to $\theta_{k+1}$.  This can be obtained, for example, by regularizing $\theta_{k+1}$ and projecting onto the orthogonal complement of the span of $\{\theta_1,\ldots,\theta_k\}$.
Then, for $\nu>0$, $N^{0,\nu;q}_\phi\alpha\in L^{2,s_0}_{0,q}(\Om,\phi)\cap \Dom(\Box_\phi)$ and satisfies \eqref{est:G0nu}.
We claim that $\{\|N^{0,\nu;q}_\phi\alpha\|_{L^2(\Om,\phi)} : 0 < \nu < 1 \}$ is unbounded.
If it were bounded then by \eqref{est:G0nu} we could find a subsequence converging (weakly) to a form $u\in L^{2,s_0}_{0,q}(\Om,\phi)\cap \Dom(\Box_\phi)$
satisfying
\[
Q_\phi(u,\psi)=(\alpha,\psi)
\]
for all $\psi\in \Dom(Q_\phi)$. By setting $\psi = \alpha$, we see that $u\neq 0$, and if
$\psi=\theta_{k+1}$, the left-hand side is
zero while the right-hand side is different from zero, a contradiction.
Thus the set $\{\|N_\phi^{0,\nu;q}\alpha\|_{L^2(\Om,\phi)}\}$ is unbounded and we can therefore find a subsequence
$\{\|N_\phi^{0,\nu_m;q}\alpha\|_{L^2(\Om,\phi)}\}$
such that $\lim_{m\to \infty}\|N_\phi^{0,\nu_m;q}\alpha\|_{L^2(\Om,\phi)}=\infty$ and $\nu_m\to 0$.
Set $w_m=\frac{N_\phi^{0,\nu_m;q}\alpha}{\|N_\phi^{0,\nu_m;q}\alpha\|}_{L^2(\Om,\phi)}$.
Then $w_m\in L^{2,s_0}_{0,q}(\Om,\phi)\cap \Dom(\Box_\phi)$, $\|w_m\|_{L^2(\Om,\phi)}=1$, and by \eqref{est:G0nu}
\[
\|w_m\|_{L^{2,s_0}(\Om,\phi)}\leq c_{s_0} \left(\frac{\|\alpha\|_{L^{2,s_0}(\Om,\phi)}}{\|N_\phi^{0,\nu_m;q}\alpha\|_{L^2(\Om,\phi)}}+1\right).
\]
Thus, there is a subsequence $\{w_{m_j}\}$ converging weakly to $\theta\in L^{2,s_0}_{0,q}(\Om,\phi)$. The compact inclusion $L^{2,s_0}_{0,q}(\Om,\phi) \hookrightarrow L^2_{0,q}(\Om,\phi)$ forces
norm convergence of $w_{m_j}$ to $\theta$ in $L^2_{0,q}(\Om,\phi)$.
Thus, $\|\theta\|_{L^2(\Om,\phi)}=1$.
To see that $\theta\in \H_{0,q}(\Om,\phi)$, we use the inequality
\begin{multline*}
Q_\phi(w_{m_j},w_{m_j})\leq Q_\phi^{0,\nu_{m_j}}(w_{m_j},w_{m_j})\\
=\frac{1}{\|N_\phi^{0,\nu_{m_j};q}\alpha\|_{L^2(\Om,\phi)}}(\alpha, w_{m_j})
\leq \frac{\|\alpha\|_{L^2(\Om,\phi)}}{\|N_\phi^{0,\nu_{m_j};q}\alpha\|_{L^2(\Om,\phi)}}.
\end{multline*}
Indeed, since $\{w_{m_j}\}$ converges weakly to $\theta$ in $L^{2,1}_{0,q}(\Om,\phi)$, we have
\begin{multline*}
Q_\phi(\theta,\theta)=\lim_{j\to\infty}\lim_{k\rightarrow\infty} Q_\phi(w_{m_j},w_{m_k})\\
\leq\lim_{j\to\infty}\lim_{k\rightarrow\infty} Q_\phi(w_{m_j},w_{m_j})^{1/2}Q_\phi(w_{m_k},w_{m_k})^{1/2}\\
=\left(\lim_{j\to\infty}Q_\phi(w_{m_j},w_{m_j})^{1/2}\right)^2=0.
\end{multline*}
Hence $\theta\in \H_{0,q}(\Om,\phi)$.

Finally, to prove $(\theta,\theta_j)=0$ for $j\leq k$, for any $0\leq j\leq k$ we have
\[
\nu_m(w_m,\theta_j)=Q_\phi^{\nu_m}(w_m,\theta_j)=\frac{1}{\|N_\phi^{0,\nu_{m};q}\alpha\|_{L^2(\Om,\phi)}}(\alpha,\theta_j)=0.
\]
This means $w_m$ is orthogonal to $\theta_k$ for $j=1,\dots,k$ and so $\theta$ is as well.
Therefore, $\H_{0,q}(\Om,\phi)\subset L^{2,s_0}_{0,q}(\Om,\phi)$.

\textbf{Step 3}: Finally, we show that $N^q_\phi:=N^{0,0;q}_\phi$ is exactly regular on $L^{2,s}_{0,q}(\Om,\phi)$ for $0 \leq s \leq s_0$.
We start this step by combining Lemma~\ref{lem:harm forms, L^2 ests} and \eqref{eqn:L2 appendix}. In particular, for any
$u \in \Dom(\dbar)\cap\Dom(\dbars_\phi) \cap \H^\perp_{(0,q)}(\Om,\phi)$
\[
\|u\|_{L^2(\Om,\phi)}^2\leq c\left(\|\dbar u\|_{L^2(\Om,\phi)}^2+\|\dbars_\phi u\|_{L^2(\Om,\phi)}^2\right)= c Q_\phi(u,u)
\]
and hence
\begin{equation}\label{eqn:uniformly}
\|u\|_{L^2(\Om,\phi)}^2\leq c \left( Q_\phi(u,u)+\nu \|u\|_{L^2(\Om,\phi)}^2 \right)=c Q_\phi^{0,\nu}(u,u)
\end{equation}
where $c$ is independent of $\nu$.

By the definition of $Q_\phi^{0,\nu}$ and $N_\phi^{0,\nu;q}$, we have
\[
Q_\phi(N_\phi^{0,\nu;q}\alpha, \psi)+\nu(N_\phi^{0,\nu;q}\alpha, \psi)=Q_\phi^{0,\nu}(N_\phi^{0,\nu;q}\alpha, \psi)=(\alpha,\psi)
\]
for any $\alpha,\psi\in L^2_{0,q}(\Om,\phi)$ with $\psi\in\Dom(\dbar)\cap\Dom(\dbars_\phi)$. Thus, if $\alpha\perp \H_{0,q}(\Om,\phi)$ and $\psi\in \H_{0,q}(\Om,\phi)$,
then it follows that $N_\phi^{0,\nu;q}\alpha\perp  \H_{0,q}(\Om,\phi)$ since a consequence of
the harmonicity of $\psi$ is that $Q_\phi(f,\psi) =0$ for any $f \in \Dom \dbar \cap \Dom\dbars_\phi$.
Thus, if $u=N_\phi^{0,\nu;q}\alpha$ and $\alpha\perp\H_{0,q}(\Om,\phi)$, then the uniformity of \eqref{eqn:uniformly} (in $\nu >0$) implies
\[
\|N_\phi^{0,\nu;q}\alpha\|_{L^2(\Om,\phi)}\leq c\|\alpha\|_{L^2(\Om,\phi)}.
\]
Combining this uniform $L^2$ estimate with \eqref{est:G0nu} yields the uniform (in $\nu>0$) $L^{2,s}$-estimate
\begin{equation}\label{est:G0nu phi}
\|N_\phi^{0,\nu;q}\alpha\|^2_{L^{2,s}(\Om,\phi)}\leq c_s\left( \|\alpha\|^2_{L^{2,s}(\Om,\phi)}+\|\alpha\|_{L^2(\Om,\phi)}^2\right)
\leq c_s\|\alpha\|^2_{L^{2,s}(\Om,\phi)},\end{equation}
for any $\alpha\in L^{2,s}_{0,q}(\Om,\phi)\cap \H^{\perp}_{0,q}(\Om)$.

Now we use argument of Step 1  to  send $\nu\to 0$ and establish that $N^q_\phi\alpha\in L^{2,s}_{0,q}(\Om,\phi)\cap \H^{\perp}_{0,q}(\Om,\phi)$
and \eqref{est:G0nu phi} holds for $\nu=0$. For $\alpha\in L^{2,s}_{0,q}(\Om,\phi) $, we decompose
$\alpha=(I-H^q_\phi)\alpha+H^q_\phi\alpha$ (recall that $H^q_\phi$ is the orthogonal projection $L^2_{0,q}(\Om,\phi)$
onto $\H_{0,q}(\Om,\phi)$).
Since $\alpha\in L^{2,s}_{0,q}(\Om,\phi)$,
it follows from Step 2 that $(I-H^q_\phi)\alpha\in L^{2,s}_{0,q}(\Om,\phi)$,
and by using \eqref{est:G0nu phi} for $\nu=0$,
we may conclude that
\begin{equation}\label{eqn:N_q est in W^s}
\|N^q_\phi\alpha\|^2_{L^{2,s}(\Om,\phi)}\leq c_s \|(I-H^q_\phi) \alpha\|^2_{L^{2,s}(\Om,\phi)}\leq c_s\|\alpha\|^2_{L^{2,s}(\Om,\phi)} ,
\end{equation}
for all $\alpha\in L^{2,s}_{0,q}(\Om,\phi)$. That means $N^q_\phi$ is exactly regular in $L^{2,s}_{0,q}(\Om,\phi)$, $0 \leq s \leq s_0$.
\end{proof}

We turn to showing that a family of closed range estimates will suffice to satisfy the hypotheses of Theorem \ref{thm:regularization} for sufficiently large $t$.
\begin{prop}\label{prop:SCRE imply regularity at s}
Let $\Om\subset\C^n$ be a smooth domain which admits the family of strong closed range estimates \eqref{eq:temp_strong_closed_range_2}
for some smooth function $\varphi$. Then for every $k\geq 1$ there exists $T_k$ so that if $t \geq t_k$, then
the hypotheses of Theorem \ref{thm:regularization} hold for $s_0=k$.
\end{prop}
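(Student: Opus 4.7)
The plan is to verify the two hypotheses of Theorem \ref{thm:regularization} with $\phi = t\varphi$ for $t \geq T_k$ sufficiently large. Hypothesis 1 (the $L^2$ basic estimate \eqref{eqn:L2 appendix}) is immediate: divide \eqref{eq:temp_strong_closed_range_2} by $tC_q$, and identify $W^{-1}(\Omega,t\varphi)$ with $L^{2,-1}(\Omega,t\varphi)$ via the duality defining these spaces in Section \ref{sec:notation}. This yields hypothesis 1 with constant $\max(1,C_t)/(tC_q)$ for any $t\geq t_0$.

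The substance is hypothesis 2: an a priori Sobolev estimate for the elliptic regularization $\Box_{t\varphi}^\delta=\Box_{t\varphi}+\delta\Delta_{T,t\varphi}$ that is uniform in $\delta \geq 0$ and holds for every integer $0 \leq s \leq k$. I would proceed by induction on $s$, with $s=0$ trivial. For the inductive step, a finite cover of $\overline\Omega$ reduces the problem to local estimates on interior and boundary patches; interior regularity is standard since $\Box_{t\varphi}^\delta$ is elliptic in the interior uniformly in $\delta$. On a boundary patch, fix tangential vector fields $T_1,\dots,T_{2n-1}$ and a normal direction, and for a multi-index $\alpha$ with $|\alpha|=s$, commute $T^\alpha$ through to get $\Box_{t\varphi}^\delta T^\alpha u = T^\alpha\Box_{t\varphi}^\delta u + [\Box_{t\varphi}^\delta, T^\alpha]u$. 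Since $T^\alpha u\in\Dom(\dbars_{t\varphi})$ modulo tangential terms of order $<s$, applying the Morrey-Kohn-H\"ormander identity together with the already-established $L^2$ basic estimate to $T^\alpha u$ yields
\[
  tC_q\|T^\alpha u\|^2_{L^2(\Omega,t\varphi)} \lesssim \|T^\alpha u\|_{L^2(\Omega,t\varphi)}\|\Box_{t\varphi}^\delta T^\alpha u\|_{L^2(\Omega,t\varphi)}+\text{commutators}.
\]
The commutator $[T^\alpha,\Box_{t\varphi}]u$ has order $2s+1$ but, after integration by parts and Cauchy-Schwarz, distributes as lower-order tangential derivatives paired with derivatives of $\varphi$ and of the coordinate functions, all handled by the inductive hypothesis. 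Normal derivatives of order $\leq s$ are then recovered from the equation by recursively expressing $\partial_\nu^j u$ in terms of $\Box_{t\varphi}^\delta u$ and tangential derivatives, using the ellipticity of $\Box_{t\varphi}^\delta$ in a neighborhood of the interior.

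The main obstacle is preserving uniformity in $\delta \geq 0$. The commutator $[T^\alpha,\delta\Delta_{T,t\varphi}]$ has coefficient $\delta$ but order $2s+1$; however, the quadratic form $Q_{t\varphi}^\delta(T^\alpha u,T^\alpha u)$ also contributes a coercive term $\delta\|\nabla_T T^\alpha u\|^2_{L^2(\Omega,t\varphi)}$, and a careful bookkeeping of derivatives shows that the $\delta$-terms from the commutator can be absorbed into this coercive piece modulo tangential derivatives of order $\leq s$. The parameter $T_k$ is then chosen large enough that the factor $tC_q$ on the left dominates all accumulated commutator constants from the inductive steps $s=1,\ldots,k$. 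Since these constants depend polynomially on $\|\varphi\|_{C^{k+2}(\overline\Omega)}$ and on $C_t/t$ (which tends to zero by the strong closed range hypothesis), an appropriate $T_k$ exists for every $k$, completing the verification of hypothesis 2 for $s_0 = k$.
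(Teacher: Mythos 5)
Your proposal follows essentially the same strategy as the paper: verify hypothesis 1 directly from \eqref{eq:temp_strong_closed_range_2}, and verify hypothesis 2 by applying the strong closed range estimate to top-order tangential derivatives of $f$, estimating the resulting quadratic-form terms via an elliptic a priori estimate for $\Box^{\delta}_{t\varphi}$, handling commutators, and then recovering normal derivatives from the noncharacteristic direction. The paper carries this out by adapting (5.3) and (3.42) from Straube's book (tracking which constants are $t$-independent), while you rederive the commutator estimates from scratch via an induction on $s$; these are the same ideas organized differently.

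The one place your reasoning goes astray is the final paragraph. You claim that the accumulated constants depend on $C_t/t$ and that $C_t/t\to 0$ ``by the strong closed range hypothesis.'' Neither is right: the hypothesis only gives $C_t/t^3\to 0$, which does \emph{not} imply $C_t/t\to 0$ (take $C_t=t^2$). More importantly, this is not needed. In hypothesis 2 of Theorem \ref{thm:regularization}, the constant $c_s$ in front of $\|u\|^2_{L^2(\Omega,\phi)}$ is allowed to depend on $t$. The $C_t$ coming from the strong closed range estimate only ever enters via $\|X^k f\|^2_{W^{-1}(\Omega,t\varphi)}\lesssim\|f\|^2_{W^{k-1}(\Omega,t\varphi)}$, which after Sobolev interpolation lands entirely in the $\|f\|^2_{L^2}$ term and is harmlessly absorbed into $c_s$. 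What actually must be dominated by the factor $tC_q$ on the left is the $t$-\emph{independent} elliptic constant $C$ from the (5.3)-type estimate $\|\dbar X^k f\|^2+\|\dbar^*_{t\varphi}X^k f\|^2+\delta\|\nabla_T X^k f\|^2\leq C\big(\|\Box^\delta_{t\varphi}f\|^2_{L^{2,k}}+\|f\|^2_{L^{2,k}}\big)+C_t\|f\|^2_{L^2}$, so $T_k$ need only exceed a constant independent of $t$. The paper is careful to keep these two roles separate; your version conflates them, though the conclusion (existence of $T_k$) survives. As a small additional note, $[T^\alpha,\Box^\delta_{t\varphi}]$ has order $s+1$, not $2s+1$.
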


\begin{proof}
Suppose that $X^k$ is a real order $k$ differential operator that is tangential on $\bd\Om$.  We define the action of $X^k$ on differential forms by locally writing each form in a special boundary chart (see 2.2 in \cite{Str10}, for example) and applying $X^k$ to the coefficients of the form in this chart.  Hence, $X^k$ will preserve the domain of $\dbar^*_{t\varphi}$.

We first note that (5.3) in \cite{Str10} holds in our case: for any $k\geq 1$, if $X^k$ is a real order $k$ differential operator that is tangential on $\bd\Om$, then we have
\begin{multline}
\label{eq:Q_form_estimate_k}
  \|\dbar X^k f\|_{L^2(\Om,t\varphi)}^2 + \|\dbars_{t\varphi} X^k f\|_{L^2(\Om,t\varphi)}^2
+ \delta \|\nabla_T X^k f\|_{L^2(\Om,t\varphi)}^2\\
\leq C\left(\norm{\Box_{t\varphi}^\delta f}^2_{L^{2,k}(\Om,t\varphi)}+\norm{f}^2_{L^{2,k}(\Om,t\varphi)}\right)+C_t\norm{f}^2_{L^2(\Om,t\varphi)},
\end{multline}
for all $f\in W^{2k+1}_{0,q}(\Om,t\varphi)\cap\dom(\Box_{t\varphi})$, where $C>0$ is a constant that is independent of $f$ and $t$, and $C_t>0$ is a constant that is only independent of $f$.  If we make the substitution $f=N^\delta_{t\varphi} u$, then the only difference between \eqref{eq:Q_form_estimate_k} and (5.3) in \cite{Str10} is the final term, which would be $C_t\norm{\Box^\delta_{t\varphi}f}^2_{L^2(\Om,t\varphi)}$ in our notation.  This relies on the estimate $\norm{f}^2_{L^2(\Om,t\varphi)}\leq C\norm{\Box^\delta_{t\varphi}f}^2_{L^2(\Om,t\varphi)}$, which is true in the pseudoconvex case studied by Straube, but not necessarily in our case.  Since our domain is not necessarily pseudoconvex, we also note that \eqref{eq:Q_form_estimate_k} may fail when $k=0$.

If $f \in W^{2k+1}_{0,q}(\Om,t\varphi)\cap\Dom \dbars_{t\varphi}$, then
$X^k f\in\Dom(\dbars_{t\varphi})$ so that \eqref{eq:temp_strong_closed_range_2} holds. Consequently,
\begin{multline}\label{eqn:X^k f}
tC_q\|X^k f\|_{L^2(\Om,t\varphi)}^2
\leq\\ \|\dbar X^k f\|_{L^2(\Om,t\varphi)}^2 + \|\dbars_{t\varphi} X^k f\|_{L^2(\Om,t\varphi)}^2
+ \delta \|\nabla_T X^k f\|_{L^2(\Om,t\varphi)}^2 + C_t \|f\|_{W^{k-1}(\Om,t\varphi)}^2.
\end{multline}
Plugging \eqref{eq:Q_form_estimate_k} into \eqref{eqn:X^k f}, we see that for any $f \in W^{2k+1}_{0,q}(\Om,t\varphi)\cap\Dom \Box_{t\varphi}$
\begin{multline*}
\|X^k f\|_{L^2(\Om,t\varphi)}^2
\leq \frac{C}{t}\big(\|\Box^\delta_{t\varphi} f\|_{L^{2,k}(\Om,t\varphi)}^2
+\|f\|_{L^{2,k}(\Om,t\varphi)}^2 \big)\\
  + C_t\left(\|f\|_{W^{k-1}(\Om,t\varphi)}^2+\norm{f}^2_{L^2(\Om,t\varphi)}\right).
\end{multline*}
As noted in the proof of (5.3) in \cite{Str10}, we may use Sobolev interpolation to estimate $\|f\|_{W^{k-1}(\Om,t\varphi)}^2\leq \eps\|f\|_{W^{k}(\Om,t\varphi)}^2+C_\eps\|f\|_{L^2(\Om,t\varphi)}^2$, so we have
\[
\|X^k f\|_{L^2(\Om,t\varphi)}^2
\leq \frac{C}{t}\big(\|\Box^\delta_{t\varphi} f\|_{L^{2,k}(\Om,t\varphi)}^2
+\|f\|_{L^{2,k}(\Om,t\varphi)}^2 \big)
  + C_t\norm{f}^2_{L^2(\Om,t\varphi)}.
\]
Using, for example, Lemma 2.2 in \cite{Str10}, we see that for forms $f\in \Dom(\dbars_{t\varphi})$, normal derivatives of $f$ are controlled by $\dbar f$, $\dbars_{t\varphi}f$, tangential derivatives of $f$, and $f$ itself.  For higher order normal derivatives, we may use, for example, (3.42) in \cite{Str10} to reduce the order and proceed by induction on the number of normal derivatives.  It follows that for $f \in W^{2k+1}(\Om,t\varphi) \cap \Dom(\dbars_{t\varphi})$
\[
\|f\|_{L^{2,k}(\Om,t\varphi)}^2
\leq \frac{C}{t}\big(\|\Box^\delta_{t\varphi} f\|_{L^{2,k}(\Om,t\varphi)}^2
+\|f\|_{L^{2,k}(\Om,t\varphi)}^2 \big)
  + C_t\norm{f}^2_{L^2(\Om,t\varphi)}.
\]
Thus, by choosing $t$ large enough,
\[
\|f\|_{L^{2,k}(\Om,t\varphi)}^2
\leq \frac{C}{t}\big(\|\Box^\delta_{t\varphi} f\|_{L^{2,k}(\Om,t\varphi)}^2  \big)  + C_t \|f\|_{L^2(\Om,t\varphi)}^2.
\]
While this inequality holds for forms $f \in L^{2,2k+1}_{0,q}(\Om,t\varphi)\cap\Dom(\dbars_{t\varphi})\cap\Dom(\Box^\delta)$, this space of forms is dense in
$L^{2,k}_{0,q}(\Om,t\varphi)\cap \Dom(\Box_{t\varphi})$ for which $\Box^\delta_{t\varphi}f \in W^k(\Om,t\varphi)$.  Thus, the proof of the proposition
is complete.
\end{proof}


\begin{proof}[Proof of Theorem \ref{thm:regularity of B and sol'n ops}]
We have already proven that $N^q_{t\varphi} : L^{2,s}_{0,q}(\Om,t\varphi) \to L^{2,s}_{0,q}(\Om,t\varphi)$ in Theorem \ref{thm:regularization} and Proposition \ref{prop:SCRE imply regularity at s}, so (i) is proven. Additionally, in Theorem \ref{thm:regularization} we proved
that $H^q_{t\varphi}$ is continuous on the same range of $s$ (this requires the Closed Graph Theorem and the fact that $H^q_{t\varphi}$ is a closed operator).  With this in mind, estimates for $\dbars_{t\varphi} N^q_{t\varphi}$, $\dbar N^q_{t\varphi}$, $\dbars_{t\varphi}\dbar N^q_{t\varphi}$, and $\dbar\dbars_{t\varphi} N^q_{t\varphi}$ will follow from the proof of Theorem 5.1 in \cite{Str10}.  The key difference is that we will use the identity $\Box_{t\varphi}N^q_{t\varphi}=I-H^q_{t\varphi}$, but we have already proven estimates for $H^q_{t\varphi}$.

For operators such as $N^q_{t\varphi} \dbar$, observe its adjoint $(N^q_{t\varphi} \dbar)^*_{t\varphi} = \dbars_{t\varphi} N^q_{t\varphi}$
is a bounded operator from $L^2_{0,q}(\Om,t\varphi) \to L^2_{0,q-1}(\Om,t\varphi)$. Consequently, since $N^q_{t\varphi} \dbar$ agrees
with $(\dbars_{t\varphi}N^q_{t\varphi})^*$ on $\Dom(\dbar)$, we simply extend $N^q_{t\varphi}\dbar$ to be the bounded
operator from $L^2_{0,q-1}(\Om,t\varphi) \to L^2_{0,q}(\Om,t\varphi)$ that agrees with
$(\dbars_{t\varphi} N^q_{t\varphi})^*$. When $N^{q-1}_{t\varphi}$ exists, this extension satisfies $N^q_{t\varphi} \dbar=\dbar N^{q-1}_{t\varphi}$.  Similarly, we can extend $N^q_{t\varphi} \dbars_{t\varphi}$ to be a well-defined operator on the appropriate weighted $L^2$-spaces.  Note that on the space $W^1_{0,q+1}(\Om,t\varphi)$, we have $N^q_{t\varphi} \dbars_{t\varphi}
= N^q_{t\varphi}\vartheta_{t\varphi}$.

To estimate $N^q_{t\varphi}\dbar$ and $\dbars_{t\varphi}N^q_{t\varphi}\dbar$, we observe that the proof of Theorem 5.1 in \cite{Str10} concludes by proving estimates for $N^1_{t\varphi}\dbar$ and $\dbars_{t\varphi}N^1_{t\varphi}\dbar$, and this proof is easily generalized to the $q>1$ case.  The same proof can be adapted to estimate $N^q_{t\varphi}\dbars_{t\varphi}$ and $\dbar N^q_{t\varphi}\dbars_{t\varphi}$.

\end{proof}

Recall the following well-known fact (cf. Theorem 3.19 in \cite{Koh73}, see also \cite{RaSt08}).
\begin{lem}\label{lem:dim of harm forms}
Let $\Om \subset\C^n$ be a domain satisfying \eqref{eq:temp_strong_closed_range_2} for some $1\leq q\leq n$ and $\varphi:\bar\Om\to\C$ a (smooth) bounded function. Then for all $t\in\R$,
$\dim_\C \H_{0,q}(\Om,t\varphi) = \dim_\C \H_{0,q}(\Om)$.
\end{lem}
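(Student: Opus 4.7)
The plan is to give a purely Hodge-theoretic argument: since $\dbar$ is defined on forms without reference to any inner product, both $\ker\dbar$ and $\Ran\dbar$, viewed as subsets of $L^2_{0,q}(\Om)$, are independent of the weight. Once one knows that $\Ran\dbar$ is closed in the relevant weighted $L^2$-space, the usual orthogonal decomposition identifies $\H_{0,q}(\Om,t\vp)$ with the quotient $\ker\dbar/\Ran\dbar$, and the lemma then reduces to the observation that this quotient does not depend on $t$.

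First I would observe that since $\vp$ is bounded on $\overline\Om$, for every $t \in \R$ the weight $e^{-t\vp}$ is bounded above and below by positive constants on $\overline\Om$, so the norms on $L^2_{0,q}(\Om,t\vp)$ and $L^2_{0,q}(\Om)$ are equivalent, and similarly at the $(0,q-1)$-level. Consequently $\ker\dbar$ and $\Ran\dbar$ are the same subsets of $L^2_{0,q}(\Om)$ for every $t$. Next, applying the hypothesis \eqref{eq:temp_strong_closed_range_2} at a single value of the parameter and invoking Lemma \ref{lem:harm forms, L^2 ests} shows that $\dbar: L^2_{0,q-1}(\Om,t_0\vp)\to L^2_{0,q}(\Om,t_0\vp)$ has closed range; by equivalence of norms this closed range property transfers to $L^2_{0,q}(\Om,t\vp)$ for every $t \in \R$ (and in particular to the unweighted case $t=0$).

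The final step is the standard Hodge decomposition: closed range of $\dbar$ in $L^2_{0,q}(\Om,t\vp)$ yields the orthogonal decomposition (in the $t\vp$-weighted inner product)
\[
\ker\dbar = \H_{0,q}(\Om,t\vp)\oplus \Ran\dbar,
\]
so $\H_{0,q}(\Om,t\vp)$ is canonically isomorphic, as a complex vector space, to the quotient $\ker\dbar/\Ran\dbar$. Since this quotient depends neither on $t$ nor on the choice of inner product, its complex dimension is a common value, which equals $\dim_\C\H_{0,q}(\Om,t\vp)$ for every $t$; specializing to $t=0$ gives the claim. There is no real obstacle in this argument: the one point worth verifying carefully is the transfer of the closed range property from the weight $t_0\vp$ to arbitrary weights $t\vp$, but this is immediate from the norm equivalence since $\vp \in C^\infty(\overline\Om)$ is bounded.
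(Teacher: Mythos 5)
Your proposal is correct and follows essentially the same route as the paper: identify $\H_{0,q}(\Om,t\vp)$ with the orthogonal complement of $\Ran\dbar$ in $\ker\dbar$, observe that these two subspaces are independent of the weight, and conclude that the dimension is invariant. You spell out a bit more detail than the paper's (very terse) proof, in particular by explicitly passing through the quotient $\ker\dbar/\Ran\dbar$ and noting that the closed range transfers between equivalent norms; these are implicit in the paper but not a genuine departure in method.
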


\begin{proof}
This follows immediately from the observation that $\mathcal{H}_{0,q}(\Omega,t\varphi)$ is the orthogonal complement of $\Ran\dbar$ in $\ker\dbar$.  Since $\Ran\dbar$ and $\ker\dbar$ are independent of
the weight $t\varphi$, the orthogonal complement of $\Ran\dbar$ in $\ker\dbar$ has the same dimension, whether measured in the weighted
or unweighted spaces.
\end{proof}

\begin{proof}[Proof of Corollary \ref{cor: no harmonic forms, solvability in C^infty}]
Let $t$ be chosen sufficiently large so that we have estimates for $I-\dbars_{t\varphi}\dbar N^q_{t\varphi}$ in $L^{2,m}_{0,q}(\Omega,t\varphi)$.  If $f\in L^{2,s}_{0,q}(\Om)$ is $\dbar$-closed, then $f\in L^{2,s}_{0,q}(\Omega,t\varphi)$ since the spaces are equivalent on bounded domains.
A $\dbar$-closed approximation in $L^{2,m}_{0,q}(\Omega,t\varphi)$ is produced as follows:
Let $\tilde f$ be an approximation in $L^{2,m}_{0,q}(\Omega,t\varphi)$. Then $\tilde f-\dbars_{t\varphi}\dbar N^q_{t\varphi}\tilde f\in L^{2,m}_{0,q}(\Omega,t\varphi)$ is a $\dbar$-closed
approximation of $f$ for $t$ sufficiently large and $\tilde f$ sufficiently close to $f$ in $L^{2,s}_{0,q}(\Om,t\varphi)$.  This will also be an approximation in $L^{2,m}_{0,q}(\Omega)$ since the norm on this space is equivalent to the norm on $L^{2,m}_{0,q}(\Omega)$ for fixed $t$ when $\Omega$ is bounded.

Smooth solvability will follow from the proof
of Theorem 6.1.1 in \cite{ChSh01} (see also \cite[Section 6.8]{HaRa11}).  It suffices to note that these proofs require Sobolev regularity for the weighted Bergman
projection $I-\dbars_{t\varphi}N^q_{t\varphi}\dbar$ and the weighted canonical solution operator $\dbars_{t\varphi}N^q_{t\varphi}$ (when $\tilde q=q-1$) or $I-\dbars_{t\varphi}\dbar N^{q}_{t\varphi}$ and $N^{q}_{t\varphi}\dbars_{t\varphi}$ (when $\tilde q=q$).
\end{proof}

\bibliographystyle{amsplain}
\bibliography{mybib3-12-19}
\end{document}